\documentclass[12pt,letterpaper]{article}
\usepackage{graphicx}
\usepackage{amsmath}
\usepackage{amsfonts}
\usepackage{amsthm}
\usepackage{amssymb}
\usepackage{color}

\usepackage{verbatim} 
\usepackage{hyperref}
\usepackage{breakurl}

\newtheorem{theorem}{Theorem}[section]
\newtheorem{lemma}[theorem]{Lemma}
\newtheorem{corollary}[theorem]{Corollary}

\newcommand{\be}{\begin{equation}}
\newcommand{\ee}{\end{equation}}
\newcommand{\bea}{\begin{eqnarray}}
\newcommand{\eea}{\end{eqnarray}}
\newcommand{\beas}{\begin{eqnarray*}}
\newcommand{\eeas}{\end{eqnarray*}}

\begin{document}

\title{Baron M\"{u}nchhausen's Sequence}

\author{Tanya Khovanova\\MIT \and Konstantin Knop\\Youth Math School of St.Petersburg State University \and Alexey Radul\\MIT}
\maketitle

\begin{abstract}
We investigate a coin-weighing puzzle that appeared in the all-Russian
math Olympiad in 2000.  We liked the puzzle because the methods of
analysis differ from classical coin-weighing puzzles.  We generalize
the puzzle by varying the number of participating coins, and deduce a
complete solution---perhaps surprisingly, the objective can be
achieved in no more than two weighings regardless of the number of
coins involved.
\end{abstract}

\section{Introduction}

The following coin-weighing puzzle, due to Alexander Shapovalov,
appeared in the Regional round of the all-Russian math Olympiad in
2000 \cite{BaronOriginal}.

\begin{quote}
Eight coins weighing $1, 2,\ldots, 8$ grams are given, but which
weighs how much is unknown.  Baron M\"{u}nchhausen claims he knows
which coin is which; and offers to prove himself right by conducting
one weighing on a balance scale, so as to unequivocally demonstrate
the weight of at least one of the coins.  Is this possible, or is he
exaggerating?
\end{quote}

We chose to investigate this puzzle partly because classical
coin-weighing puzzles \cite{Guy} tend to ask the person designing the
weighings to discover something they do not know, whereas here the
party designing the weighings knows everything, and is trying to use
the balance scale to convince someone else of something.  There is
therefore a difference of method: when the weigher is an investigator,
they typically find themselves playing a minimax game against fate:
they must construct experiments all of whose possible outcomes
are as equally likely as possible, in order to learn the most they
can even in the worst case.  The Baron, however, knows everything,
so he has the liberty to construct weighings whose results look
very surprising from the audience's perspective.

We invite the reader to experience the enjoyment of solving this
puzzle for themselves before proceeding; we will spoil it completely
on page~\pageref{puzzle-solution}.

\subsection{The Sequence}

We will generalize this puzzle to $n$ coins that weigh $1, 2,\ldots,
n$ grams.  We are interested in the minimum number of weighings on a
balance scale that the Baron needs in order to convince his audience
about the weight of at least one of those coins.  It turns out that
the answer is never more than two; over the course of the paper, we will
prove this, and determine in closed from which $n$ require two
weighings, and which can be done in just one.

\subsection{The Roadmap}

In Section~\ref{sec:sequence} we define the Baron's sequence again and
show some of the flavor of this problem by calculating the first few
terms.  In Section~\ref{sec:three} we prove the easy but perhaps
surprising observation that this sequence is bounded; in fact that no
term of this sequence can exceed three.  That theorem opens the door to a
complete description of all the terms of Baron M\"{u}nchhausen's
sequence, which we begin in Section~\ref{sec:ones} by explicitly
finding the terms that are equal to one \cite{blog2}, to wit, the numbers $n$ of
coins such that the Baron can prove the weight of one coin among $n$
in just one weighing.

Discriminating between two weighings sufficing and three being
necessary is harder.  The remainder of the paper is dedicated to
proving that three is not a tight upper bound; namely that the
Baron can always demonstrate the weight of at least one coin among any
$n$ in at most two weighings.  Section~\ref{sec:twostheorem} serves
as a signpost by restating the theorem, and Section~\ref{sec:notation}
briefly introduces some notation we will use subsequently.  The actual
proof is sufficiently involved that we break it down into a separate
Section~\ref{sec:preliminaries} for preliminaries, and then the
proof itself in Section~\ref{sec:main-proof}.

Finally, we close with Section~\ref{sec:discussion} for some
generalizations and ideas for future research.

\section{Baron M\"{u}nchhausen's Sequence}\label{sec:sequence}

Baron M\"{u}nchhausen's sequence $a(n)$ is defined as follows:

\begin{quote}
Let $n$ coins weighing $1, 2,\ldots, n$ grams be given.  Suppose Baron
M\"{u}nchhausen knows which coin wieghs how much, but his audience
does not.  Then $a(n)$ is the minimum number of weighings the Baron
must conduct on a balance scale, so as to unequivocally demonstrate
the weight of at least one of the coins.
\end{quote}
The original Olympiad puzzle is asking whether $a(8) = 1$.

\subsection{Examples}

Let us see what happens for small indices. 

If $n=1$ the Baron does not need to prove anything, as there is just
one coin weighing 1 gram.

For $n=2$ one weighing is enough.  Place one coin on the left cup of
the scale and one on the right, after which everybody knows that the
lighter coin weighs 1 gram and the heavier coin weighs 2 grams.

For $n=3$, the Baron can put the 1-gram and 2-gram coins on one cup
and the 3-gram coin on the other cup.  The cups will balance.  The
only way for the cups to balance is for the lone coin to
weigh 3 grams.

For $n=4$, one weighing is enough.  The Baron puts the 1-gram and
2-gram coins on one cup and the 4-gram coin on the other cup. The only
way for one coin out of four to be strictly heavier than two others
from the set is for it to be the 4-gram coin.  The 3-gram is also
uniquely identified by the method of elimination.

For $n=5$, Baron M\"{u}nchhausen cannot do it in one weighing.  This example is small
enough to prove exhaustively: every possible outcome of every possible
weighing admits of multiple assignments of weights to coins, which do
not fix the weight of any one coin.  For an example of the reasoning,
suppose the Baron puts 1 coin on one cup and 2 coins on another, and shows that
the single coin is heavier.  All of $1+2<4$, $2+1<5$, and $1+3<5$ are
consistent with this data, so no coin is uniquely identified.
Checking all the other cases, as usual, is left to the reader.

For $n=6$, one weighing is enough. This case is similar to the case of
$n=3$: $1+2+3 = 6$.

For $n=7$, one weighing is enough. This case is similar to the case of
$n=4$: $1+2+3 < 7$.

For $n=8$,\label{puzzle-solution} the original Olympiad problem asks whether one weighing is
enough.  It is---Baron M\"{u}nchhausen can convince his audience by
placing all the coins weighing 1 through 5 grams on one cup of the
scale, and the two coins weighing 7 and 8 grams on the other.  The
only way for two coins with weights from the set $1, 2,\ldots, 8$ to
balance five is for the two to weigh the most they can, at $7 + 8 =
15$ grams, and for the five to weigh the least they can at $1 + 2 + 3
+ 4 + 5 = 15$ grams.  This arrangement leaves excactly one coin off
the scale, whose weight, by elimination, must be 6 grams.

So Baron M\"{u}nchhausen's sequence begins with 0, 1, 1, 1, 2, 1, 1, 1.  It also turns
out that these examples illustrate all the methods of proving the
weight of one coin with just one weighing; we will prove this in
Section~\ref{sec:ones}.

\section{Three Weighings are Always Enough}\label{sec:three}

In most of the coin problems we remember from childhood \cite{Guy} the number
of weighings needed to solve the problem grows logarithmically
with the number of coins. Thus, our upper bound theorem may
come as a surprise:

\begin{theorem}\label{maximum}
$a(n) \le 3$.
\end{theorem}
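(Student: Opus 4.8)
The plan is to convince the audience of the weight of a single, extremal coin---typically the heaviest, of weight $n$---and to reach it by ``pinning'' an entire block of coins at once. I would first isolate the one mechanism that actually transmits \emph{absolute} weight information through weighings: an outcome that can be realized in only one configuration. A comparison with genuine slack only constrains a permutation of $\{1,\dots,n\}$ to a half-space and can never by itself fix a coordinate; what fixes coordinates is a \emph{tight} outcome, one whose observed balance (or strict inequality) is achievable in essentially a single configuration. The single-weighing examples already exhibit the two basic tight gadgets: putting $\{1,\dots,s\}$ against the single coin the Baron knows to be heaviest and seeing balance forces the minimal sum $s(s+1)/2$ to be realized, which pins the lone coin at $n$ exactly when $n=s(s+1)/2$; seeing the lone coin strictly heavier is the same tight gadget one notch lower and pins it at $n$ exactly when $n-1=s(s+1)/2$. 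In either case every coin on the scale is pinned simultaneously, because the minimal sum of $s$ coins is achieved only by $\{1,\dots,s\}$, and the weight of a coin left off the scale is then forced by elimination.

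The core of the argument is to manufacture such a tight outcome for \emph{every} $n$, not only when $n$ or $n-1$ is triangular, by spending the extra weighings. I would let $T_s=s(s+1)/2$ be the largest triangular number not exceeding $n$ and track the gap $g=n-T_s$, which satisfies $0\le g\le s$ since the next triangular number is $T_s+(s+1)$. The construction is organized as follows: use one or two extremal weighings to force a bottom block into its minimal-sum configuration and, simultaneously or in a second weighing, a top block into its maximal-sum configuration (the maximal sum of $r$ coins is likewise realized only by the heaviest $r$ coins, $\{n-r+1,\dots,n\}$), so that both blocks are pinned coin by coin. The remaining weighing is reserved to absorb the gap $g$: once the blocks are pinned, the coins not yet accounted for form a \emph{known} short list of weights, and a final tight weighing among them---again of ``balance'' or ``heavier-by-the-extreme'' type---pins one of them, after which elimination finishes. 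Because we are allowed three weighings rather than two, we can afford to treat the bottom block, the top block, and the gap more or less independently instead of packing them into a single clever weighing.

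The main obstacle, and where the real work lies, is uniformity in $n$: I must verify that for every value of the gap $g\in\{0,1,\dots,s\}$ there is an honest assignment of coins to the cups whose observed outcome is consistent with \emph{only} the intended extremal configuration---it is not enough that the configuration be the Baron's truth, the audience must be unable to produce any other permutation yielding the same sequence of outcomes. Guaranteeing this forces a finite case analysis on $g$ (equivalently on $n$ modulo the triangular spacing), and in each case one checks the genuinely extremal character of each weighing, namely that the minimal (resp.\ maximal) sum of the relevant size is strictly separated from the next achievable sum, so that the outcome has a unique realization. I expect the book-keeping of these cases, rather than any single idea, to be the substance of the proof; the payoff is that the generous bound of three weighings leaves enough slack that every residue can be handled, whereas compressing the same information into two weighings---the later, harder theorem---will demand that the blocks and the gap be combined far more cunningly.
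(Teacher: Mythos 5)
Your proposal has a genuine gap at its core: the ``gap-absorbing'' step does not work, and the case analysis you defer to is not actually finite. Writing $n = T_s + g$ with $T_s$ the largest triangular number at most $n$ gives a gap $g$ that ranges over $\{0,1,\dots,s\}$, and $s \sim \sqrt{2n}$ grows without bound --- so ``$n$ modulo the triangular spacing'' is not a bounded set of residues, and no finite bookkeeping over $g$ can be uniform in $n$. Worse, your only tight gadgets cannot absorb a gap $g \ge 2$: a balanced outcome is tight only when the extremal sums coincide exactly, and a strict-inequality outcome is tight only when the margin is exactly one gram (this is why the paper's one-weighing characterization allows only $n = T_i$ and $n = T_i + 1$, plus two special square-triangular cases). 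Each weighing can therefore absorb at most one gram of slack, while your greedy decomposition leaves up to $\sim\sqrt{2n}$ grams to absorb in a single reserved weighing. Your fallback --- a final tight weighing ``among the leftover coins'' --- also fails, because pinning the bottom block by a balanced weighing pins it only as a set, not coin by coin, and more fundamentally you never explain against what certified weight the bottom block could balance in the first place: every coin on the other cup is equally unknown to the audience, so for general $n$ no single weighing in your scheme has a tight outcome at all.

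The two ideas your proposal is missing are exactly the paper's: first, use Gauss's three-triangular-number theorem to write $n = T_i + T_j + T_k$ \emph{exactly}, rather than greedily peeling off the largest triangular block; second, make tightness \emph{cascade} by reusing a certified coin. The Baron weighs $1+2+\cdots+k$ against a single coin, which certifies a lower bound of $T_k$ on that coin; he then places that same coin on the left of the next weighing, so its certified lower bound propagates, yielding a lower bound of $T_k + T_j$ on the next solitary right-hand coin, and then $T_k + T_j + T_i = n$ on the last. The certification is retroactive: no individual weighing is tight on its own; only when the accumulated lower bound meets the a priori upper bound $n$ does the whole chain snap tight. Your sketch correctly isolates tightness as the mechanism that transmits absolute weight information, but without the exact triangular decomposition and the coin-reuse cascade there is no way to manufacture a tight outcome for general $n$, and the proof does not go through.
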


Before going into the proof we would like to introduce a little
notation. First, we denote the $x$-th triangular number $x(x+1)/2$ by
$T_x$.

We already did this in our example weighings, but we would like to
make official the fact that, when describing weighings the Baron
should carry out, we will denote a coin weighing $i$ grams with just
the number $i$.  In addition, we will use round brackets to denote one
coin of the weight indicated by the expression enclosed in the
brackets.  We need this notation to distinguish $i+1$, which represents
two coins of weight $i$ and 1 on some cup, from $(i+1)$, which represents
one coin of weight $i+1$ on some cup.

\begin{proof}
We know, since Carl Friedrich Gauss proved it in 1796, \cite{Gauss, Dunnington} that any
number $n$ can be represented as a sum of not more than 3 triangular
numbers. Let $n = T_i + T_j +T_k$, where $T_i \le T_j \le T_k$ are
triangular numbers with indices $i$, $j$, and $k$.

Barring special cases, Baron M\"{u}nchhausen can display a sequence of
three weighings with one coin on the right cup each: first
\[ 1+2+3+\dots+k=T_k, \]
then
\[ T_k + 1+2+3+\dots+j=(T_k+T_j), \]
and finally
\[ (T_k + T_j) + 1+2+3+\dots+i=n. \]
The first weighing demonstrates a lower bound of $T_k$ on the weight of
the coin the Baron put on the right.  Since he then reuses that coin on the
left, the second weighing demonstrates a lower bound of $T_k + T_j$ on
the weight of the coin that goes on the right in the second weighing.
Since he then reuses \emph{that} coin in the third weighing, the audience finds a
lower bound of $T_k + T_j + T_i$ on the coin on the right hand side of
the last weighing.  But since there are only $n$ coins, there is
already an upper bound of $n$ on that coin's weight, so the assumed
equality $T_k + T_j + T_i = n$ determines that coin's weight
completely (as well as the weights of the $T_k$- and $(T_k + T_j)$-gram
coins).

The Baron should start with the largest triangular number to make sure
that he will not need any coin to appear in the same weighing twice: since $k$
is the largest index, the coin $T_k$ will not be in the sequence $1,
2,\dots, j$, and the coin $(T_k+T_j)$ will not be in the sequence $1,
2,\dots, i$.

When does this procedure fail?  If $i=0$, the last weighing is
impossible but also redundant, because it would be asking to weigh the
$n$-gram coin against itself.  If $j=0$, the second weighing is likewise
impossible and redundant.  If $k=0$, there is nothing to prove because
$n=0$ as well.  Finally, if $k=1$, the first weighing is impossible
because $T_k = 1$ occurs in $1\ldots k$, but in this case $n$ is at
most $3$, and that can be solved in fewer than three weighings anyway.
\end{proof}

\section{When does One Weighing Suffice?}\label{sec:ones}

Let us look at where ones occur in this sequence.  We characterize the
possible weighings that could determine a coin by themselves.  The
determination of the numbers $n$ of coins that admit such weighings,
i.e., for which $a(n)=1$, is a direct consequence.

\subsection{One-Weighing Determinations}

\begin{theorem}\label{ones}
The weight of some coin can be confirmed with just one weighing if and
only if all of:
\begin{enumerate}
\item one cup of that weighing contains all the coins with weights
  from 1 to some i;
\item the other cup contains all the coins with weights from some j to
  n;
\item Either the scale balances, or the cup containing the 1-gram coin
  is lighter by one gram; and
\item At least one cup contains exactly one coin, or exactly one coin
  is left off the scale.
\end{enumerate}
\end{theorem}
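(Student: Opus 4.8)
The plan is to prove the two directions separately, with sufficiency being routine and necessity carrying the weight. For \textbf{sufficiency}, I would assume conditions (i)--(iv) and exploit extremality of prefix and suffix sums. Writing the prefix cup as $\{1,\dots,i\}$ (the unique $i$-coin set of minimum possible sum $T_i$) and the suffix cup as $\{j,\dots,n\}$ (the unique $(n-j+1)$-coin set of maximum possible sum $M$), condition (iii) says the true sums satisfy either $T_i=M$ (balance) or $M=T_i+1$ (prefix lighter by one). For \emph{any} consistent assignment the $i$-coin cup has sum $\ge T_i$ and the other cup has sum $\le M$, so the observed relation squeezes both inequalities to equalities; this forces the weight-set on each cup, and hence the set $\{i+1,\dots,j-1\}$ left off. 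Condition (iv) then supplies a singleton group whose forced set has one element, determining that coin (directly if it is alone on a cup, by elimination if it is the lone coin off the scale).

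For \textbf{necessity}, suppose a weighing determines coin $c$ with weight $v$. The first and cleanest step is a same-group swap argument giving condition (iv): if any two distinct coins lie on the same cup, or both lie off the scale, exchanging their weight labels leaves all three group sums unchanged, hence leaves the balance result unchanged, producing a second consistent assignment that disagrees on those two coins. Thus neither is determined, so the determined coin $c$ must be alone on its cup or be the unique coin off the scale --- exactly condition (iv).

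The remaining work is to force conditions (i), (ii), and (iii), and here I would split on the position of $c$ (alone on the heavy cup, alone on the light cup, or the unique off-scale coin) and argue by extremal subset sums. In each case the determination constraint, combined with the observed sign of the sum difference, pins the \emph{other} cup to an extreme: e.g.\ if $c$ is alone and heavier, the opposite cup's $r$ coins must sum to less than $v$, but their sum is at least the minimum $T_r$, and only when $v$ sits one above that minimum (or exactly at it, in the balance case) is $v$ the unique feasible value; this simultaneously forces the opposite cup to be the minimal-sum set $\{1,\dots,r\}$ (a prefix) and forces the gap to be $0$ or $1$. The off-scale case is handled dually, using that forcing $v$ by elimination is equivalent to forcing the entire on-scale weight-set, which again happens only when the light cup realizes its minimum sum and the heavy cup its maximum. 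Tracking which cup holds the $1$-gram coin shows the off-by-one can occur only with that cup on the light side, yielding the precise form of condition (iii).

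The main obstacle will be step three: cleanly establishing that non-extremal cup contents always admit a sign-preserving perturbation that defeats determination. I expect the cleanest route is, in each positional case, to show that a missing small weight on the light cup or a missing large weight on the heavy cup can be swapped in (against a coin off the scale, or across the cups) without changing the observed inequality, thereby altering the weight-set and freeing $c$; the bookkeeping of exactly which swaps preserve the sign while changing the forced value is where the argument is most delicate, and organizing it by the three positions of $c$ is what keeps it tractable.
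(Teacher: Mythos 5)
Your sufficiency argument and your opening move for necessity match the paper's proof: the extremal prefix/suffix sums squeeze every consistent assignment to the unique extremal one, and the same-group label swap shows the determined coin must be alone in its group, which is exactly the paper's first observation and yields condition (iv). The genuine gap is in your step three, concentrated in the one case your trichotomy (heavy cup / light cup / off-scale) glosses over: the determined coin $k$ off the scale with the scale \emph{balanced}. There your stated toolkit --- sign-preserving single swaps, across the cups or against an off-scale coin --- fails in principle: with the scale balanced, any useful perturbation must change both cup weights by exactly the same amount, and no single swap involving the lone off-scale coin $k$ can do that. The paper needs three extra devices here. First, three-coin \emph{rotations}: if $k-1$ and $k-2$ sit on opposite cups, relabel so that $k$ replaces $k-1$, $k-1$ replaces $k-2$, and $k-2$ goes off the scale; both cups gain one gram, balance is preserved, and a different coin is now off the scale (the paper's Lemma \ref{lemma1}). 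Second, simultaneous \emph{double} swaps (e.g., swap $k-1$ with $k-3$ and $k$ with $k-4$, or $c$ with $c+1$ and $k$ with $k-2$) in Lemmas \ref{lemma2} and \ref{lemma3}. Third, even this machinery only forces the prefix/suffix structure for $k \geq 5$ (via the quadratic $k^2 - 3k - 8 \leq 0$), and the residual cases $k < 5$, $n \leq 7$ are settled by enumeration with a parity observation.

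That enumeration is not cosmetic bookkeeping, and it is where the claim implicit in your plan --- that non-extremal cup contents always admit a local, sign-preserving perturbation --- is actually false. For $n=7$, $k=4$, the balanced non-extremal weighing $7+3+2 = 1+5+6$ admits no defeating local move at all; it is ruled out only by exhibiting the same-shape assignment $7+3+1 = 2+4+5$, a global reassignment that leaves the 6-gram coin off the scale instead of the 4-gram coin, which no swap- or rotation-based search of the kind you describe would produce. A smaller instance of the same issue appears in the case where $k$ is alone on a cup and the scale balances: to show $k=n$ the paper uses a compound move (substitute $k+1$ for $k$ \emph{and} a one-gram-heavier coin for the heaviest coin on the other cup), again two simultaneous substitutions rather than one. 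So your architecture is sound and is the paper's own, but completing it requires enlarging the move set beyond single swaps and accepting a finite case check, which your proposal does not anticipate.
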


Why can such a weighing be convincing?  In general, $i$ will be much
larger than $n-j$.  The only way for so few coins to weigh as much as
(or more than) so many will be for the few to be the heaviest and the
many to be the lightest.  We show in the proof that those are exactly
the convincing weighing structures; thereafter, in Section~\ref{sec:one-indices},
we discuss the
circumstances under which such a weighing exists and can therefore
determine the weight of a single coin.

\begin{proof}
What does it mean for Baron M\"{u}nchhausen to convince his audience of the
weight $k$ of some coin, using just one weighing? From the perspective of the
audience, a weighing is a number of coins in one cup, a number of coins
in the other cup, and a number of coins not on the scale, together
with the result the scale shows (one or the other cup heavier, or both
the same weight). For the audience to be convinced of the weight of some
particular coin, it must therefore be the case that all possible
arrangements of coin weights consistent with that data agree on the
weight $k$ of the coin in question.

\textbf{``If'' direction.}  Suppose all the conditions in the theorem
statement are met.  Then one cup of the scale contains $i$ coins, and
the other $n-j+1$ coins.  Suppose, for definiteness, that these are
the left and right sides of the scale, respectively.  The least that
$i$ coins can weigh is $T_i$.  The most that $n-j+1$ coins can weigh
is $T_n - T_{j-1}$.  The audience can compute these numbers; and if
they are equal, then the audience can conclude that, for the scale to
balance, the left cup must have exactly the coins of weights $1\ldots
i$, and the right cup must have exactly the coins of weights $j\ldots
n$.  Likewise, if $T_n - T_{j-1}$ exceeds $T_i$ by one, the same
allocation is the only way for the scale to indicate that the cup with
$i$ coins is lighter.

If either of the sets $1\ldots i$ or $j\ldots n$ is a singleton, that
determines the weight of that coin directly; otherwise, condition
{\it(iv)} requires there to be only one coin off the scale, and the
weight of that one remaining coin can be determined by the process of
elimination.

\textbf{``Only if'' direction.}  Our proof strategy is to
look for ways to alter a given arrangement of coin weights so as to
change the weight given to the coin whose weight is being
demonstrated; the requirement that all such alterations are impossible
yields the desired constraints on convincing weighings.

First, obviously, the coin whose weight $k$ the Baron is trying to
confirm has to be alone in its group: either alone on some cup or the
only coin not on the scale. After that observation we divide the proof of the
theorem into several cases.

\textbf{Case 1.} The $k$-gram coin is on a cup and the scale is balanced.
Then by above $k$ is alone on its cup.  We want to show two things:
$k = n$, and the coins on the other cup weigh 1, 2, ..., $i$ grams for
some $i$. For the first part, observe that if $k < n$, then the coin
with weight $k+1$ must not be on the scale (otherwise it would
overbalance coin $k$). Therefore, we can substitute coin $k+1$ for
coin $k$, and substitute a coin one gram heavier for the heaviest coin
that was on the other cup, and produce thereby a different weight arrangement
with the same observable characteristics but a different weight for
the coin the Baron claims has weight $k$.

To prove the second part, suppose the contrary. Then it is possible to
substitute a coin 1 gram lighter for one of the coins on the other
cup. Now, if coin $k-1$ is not on the scale, we can also substitute
$k-1$ for $k$, and again produce a different arrangement with the same
observable characteristics but a different weight for the coin labeled
$k$. On the other hand, if $k-1$ is on the scale but $k-2$ is not,
then we can substitute $k-2$ for $k-1$ and then $k-1$ for $k$ and the
weighing is again unconvincing. Finally, if both $k-1$ and $k-2$ are
on the scale, and yet they balance $k$, then $k=3$ and the theorem
holds.

Consequently, $k = n = 1 + 2 + \dots + i = T_i$ is a triangular number.

\textbf{Case 2.} The $k$-gram coin is on the lighter cup of the
scale. Then: first, $k = 1$, because otherwise we could swap $k$ and
the 1-gram coin, making the light cup lighter and the heavy cup
heavier or unaffected; second, the 2-gram coin is on the heavy cup and
is the only coin on the heavy cup, because otherwise we could swap $k$
with the 2-gram coin and not change the weights by enough to affect
the imbalance; and finally $n = 2$ because otherwise we could change
the weighing $1 < 2$ into $2 < 3$.

Thus the theorem holds, and the only example of this case is $k = 1, n
= 2$.

\textbf{Case 3.} The $k$-gram coin is on the heavier cup of the scale. Then
$k = n$ and the lighter cup consists of some collection of the
lightest available coins, by the same argument as Case 1 (but even
easier, because there is no need to maintain the
balance). Furthermore, $k$ must weigh exactly 1 gram more than the
lighter cup, because otherwise, $k-1$ is not on the lighter cup and
can be substituted for $k$, making the weighing unconvincing.

Consequently, $k = n = (1 + 2 + \dots + i) + 1 = T_i + 1$ is one more than a
triangular number.

\textbf{Case 4.} The $k$-gram coin is not on a cup and the scale is not
balanced. Then, since $k$ must be off the scale by itself, all the
other coins must be on one cup or the other. Furthermore, all coins
heavier than $k$ must be on the heavier cup, because otherwise we
could make the lighter cup even lighter by substituting $k$ for one of
those coins. Likewise, all coins lighter than $k$ must be on the
lighter cup, because otherwise we could make the heavier cup even
heavier by substituting $k$ for one of those coins. So the theorem
holds; and furthermore, the cups must again differ in weight by
exactly 1 gram, because otherwise we could swap $k$ with either $k-1$
or $k+1$ without changing the weights enough to affect the result on
the scale.

Consequently, the weight of the lighter cup is $k(k-1)/2$, and the
weight of the heavier cup is $k(k-1)/2 + 1$. Thus the total weight of
all the coins is $n(n+1)/2 = k(k-1)/2 + k + (k(k-1)/2 + 1) =
k^2+1$. In other words, case 4 is possible iff $n$ is the index of a
triangular number that is one greater than a square.

\textbf{Case 5.} The $k$-gram coin is not on a cup and the scale is
balanced. This case is hairier than all the others combined, so we
will take it slowly (noting first that all the coins besides $k$ must be
on some cup).

\begin{lemma}\label{lemma1}
The two coins $k-1$ and $k-2$ must be on the same cup, if they exist
(that is, if $k > 2$). Likewise $k-2$ and $k-4$; $k+1$ and $k+2$; and
$k+2$ and $k+4$.
\end{lemma}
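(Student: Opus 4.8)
The plan is to argue by contraposition, in the same spirit as the earlier cases of the theorem: assuming one of the listed pairs sits on opposite cups, I will exhibit a second assignment of the weights $1,\dots,n$ to the very same physical positions that leaves every coin in its cup, keeps the scale balanced, and yet places a \emph{different} weight off the scale. Since the audience sees only the positions and the balance result, such a pair of assignments shows the weighing cannot confirm the off-scale coin, contradicting the assumption of Case~5. A useful preliminary observation is that, because the total weight $T_n$ is fixed and each cup must carry exactly half of $T_n$ minus the off-scale coin, any alternative off-scale coin must have the same parity as $k$; this is why each of the moves below deposits one of $k\pm2$ or $k\pm4$ off the scale, all of which share $k$'s parity.

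The engine for all four claims is a single $3$-cycle of weights among three positions. Consider first $k-1$ and $k-2$, and suppose they lie on opposite cups; call the cup holding $k-1$ cup $A$ and the cup holding $k-2$ cup $B$. Reassign weights so that the off-scale position receives $k-2$, the position on $A$ that held $k-1$ receives $k$, and the position on $B$ that held $k-2$ receives $k-1$, while every other position keeps its weight. This is a genuine permutation of $\{1,\dots,n\}$. Its effect on cup $A$ is to replace a $k-1$ by a $k$, a change of $+1$; its effect on cup $B$ is to replace a $k-2$ by a $k-1$, again $+1$; so the two cups rise equally and the scale stays balanced. The off-scale weight, however, has changed from $k$ to $k-2$, so the weighing is not convincing. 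Hence, if the weighing is convincing, $k-1$ and $k-2$ must share a cup.

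The remaining three pairs are handled by the identical device applied to the chains $k,\,k-2,\,k-4$; then $k,\,k+1,\,k+2$; and then $k,\,k+2,\,k+4$. In each case one cycles $k$ into the position of the coin nearer to $k$, that nearer coin into the position of the farther coin, and the farther coin off the scale. The per-cup change is the common gap along the chain---$+2$, $-1$, and $-2$ respectively---so balance is preserved each time, while the off-scale coin is replaced by the farther member of the pair. The hypothesis ``if they exist'' is exactly what guarantees the farther coin is a legal weight (e.g.\ $k-4\ge1$, or $k+4\le n$); when it does not exist the claim is vacuous. The case in which the nearer coin sits on $B$ rather than $A$ is obtained by relabeling the cups and changes nothing.

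The only real care needed---and the step I would double-check most carefully---is the bookkeeping that each reassignment is simultaneously a bijection onto $\{1,\dots,n\}$, cup-count preserving (since each cup both gains and loses exactly one coin), and balance preserving (which reduces to the one-line computation that the two cups change by the same amount). Once these are confirmed, each move is an admissible alternative world for the audience, and the lemma follows. I would also flag that this lemma only pins down these local ``same-cup'' constraints; it is a stepping stone, and the global structure of the Case~5 weighing will have to be assembled from it, together with further such constraints, in the argument that follows.
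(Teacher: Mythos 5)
Your proposal is correct and is essentially the paper's own argument: the $3$-cycle reassignment you describe (off-scale position gets the farther coin, the nearer coin's position gets $k$, the farther coin's position gets the nearer coin) is exactly the paper's ``rotation'' of $k$, $k-1$, $k-2$, which shifts both cups by the same amount ($+1$, $+2$, $-1$, $-2$ in the four cases) while changing the off-scale coin, merely phrased dually in terms of permuting weights over fixed positions rather than moving coins. Your added parity remark and the explicit bijection/cup-count bookkeeping are harmless elaborations of the same proof.
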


\begin{proof}
Suppose the two coins $k-1$ and $k-2$ are not on the same cup. Then we can rotate $k$, $k-1$, and $k-2$, that
is, put $k$ on the cup with $k-1$, put $k-1$ on the cup with $k-2$,
and take $k-2$ off the scale. This makes both cups heavier by one
gram, producing a weighing with the same outward characteristics as
the one we started with, but a different coin off the scale. The same
argument applies to the other three pairs of coins we are interested
in, mutatis mutandis.
\end{proof}

\begin{lemma}\label{lemma2}
The four coins $k-1$, $k-2$, $k-3$ and $k-4$ must be on the same cup
if they exist (that is, if $k \ge 5$).
\end{lemma}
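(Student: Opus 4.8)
The plan is to reduce the four-coin claim to the three pairings already supplied by Lemma~\ref{lemma1} and then close the remaining gap with a single explicit rearrangement. From Lemma~\ref{lemma1} we know that $k-1$ and $k-2$ share a cup, and that $k-2$ and $k-4$ share a cup, so $k-1$, $k-2$, and $k-4$ all lie on one cup; call it the left cup. It remains only to show that $k-3$ is on the left cup as well. I would argue by contradiction: assume $k-3$ sits on the right cup, and then manufacture an alternative arrangement with exactly the same visible data---both cups balanced, and the same number of coins in each cup and off the scale---but with a different coin off the scale, contradicting the assumption that the weighing determines the off-scale coin.

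The main obstacle is precisely the reason Lemma~\ref{lemma1}'s rotation does not already dispatch $k-3$. The balance-preserving three-coin rotation used there moves $k$ onto a cup and ejects a coin whose weight has the same parity as $k$, whereas $k-3$ has the opposite parity and can therefore never be the coin that leaves the scale. Indeed, changing the off-scale coin from $k$ to $k-3$ is impossible outright, since it would force each cup to change by the non-integer amount $3/2$. The new idea must therefore be to move $k$ onto the scale and eject a coin of the same parity as $k$, using the assumed presence of $k-3$ on the wrong cup only to absorb the resulting imbalance through a compensating swap.

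Concretely, assuming $k-3$ is on the right cup, I would perform the following move, which is legitimate because $k \ge 5$ guarantees that all of $k-1$, $k-2$, $k-3$, $k-4$ exist as coins: place $k$ on the left cup and take $k-4$ off the scale, then swap $k-1$ (on the left) with $k-3$ (on the right). A short computation shows that the left cup changes by $k-(k-4)-(k-1)+(k-3)=2$ and the right cup by $(k-1)-(k-3)=2$, so both cups gain exactly two grams and the balance is preserved; the number of coins in each cup and off the scale is unchanged; and the coin now off the scale is $k-4$ rather than $k$. The audience cannot distinguish this arrangement from the original, yet it assigns a different coin to the off-scale position---the contradiction we seek. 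Hence $k-3$ must lie on the same cup as $k-1$, $k-2$, and $k-4$, which establishes the lemma.
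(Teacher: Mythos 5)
Your proof is correct and takes essentially the same approach as the paper: the paper's own argument is exactly the double swap of $k-1$ with $k-3$ and of $k$ with $k-4$, preserving the balance and the coin counts while moving $k-4$ off the scale in place of $k$. Your computation that both cups gain $2$ grams is right (the paper's phrase ``each cup becomes lighter by 2 grams'' is a harmless sign slip), and your parity remark explaining why the rotation of Lemma~\ref{lemma1} cannot handle $k-3$ directly is a correct, if optional, addition.
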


\begin{proof}
By Lemma \ref{lemma1}, the three coins $k-1$, $k-2$, and $k-4$ must be
on the same cup. Suppose coin $k-3$ is on the other cup. Then we can
swap $k-1$ with $k-3$ and $k$ with $k-4$. Each cup becomes lighter by 2
grams without changing the number of coins present, the balance is
maintained, and the Baron's audience is not convinced.
\end{proof}

\begin{lemma}\label{lemma3}
If coin $k-4$ exists, that is if $k \ge 5$, all coins lighter than k
must be on the same cup.
\end{lemma}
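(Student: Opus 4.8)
The plan is to argue by contradiction, exhibiting an alternative weight-assignment with the same visible data (the two cup-counts and the balanced result) but a different coin off the scale. Working in Case 5, with the $k$-gram coin off the scale and the scale balanced, Lemma~\ref{lemma2} lets me fix a cup, call it $L$, that holds all four of $k-1, k-2, k-3, k-4$; write $R$ for the other cup. Suppose, for contradiction, that some coin lighter than $k$ sits on $R$, and let $q$ be the heaviest such coin. Since $k-1,k-2,k-3,k-4$ are on $L$, we get $q \le k-5$, and by maximality every weight strictly between $q$ and $k$ lies on $L$; in particular both $q+1$ and $k-2$ lie on $L$, while $q$ lies on $R$.

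The heart of the argument is a single balance-preserving relabeling supported on just the four weights $q,\,q+1,\,k-2,\,k$. I would swap the labels $q \leftrightarrow q+1$ (a cross-cup swap, since $q$ is on $R$ and $q+1$ is on $L$) and simultaneously swap the labels $k-2 \leftrightarrow k$ (an off-scale/$L$ swap, since $k$ is off the scale and $k-2$ is on $L$). The first swap raises $R$ by one gram and lowers $L$ by one gram; the second raises $L$ by two grams and moves the $k-2$ coin off the scale. The net effect is that each cup gains exactly one gram, so the scale still balances and the coin-counts are untouched, yet the coin off the scale now reads $k-2$ rather than $k$. That is a second assignment consistent with everything the audience can see, so the weight of the off-scale coin was never pinned down, contradicting the assumption that the weighing is convincing. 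Hence no coin lighter than $k$ is on $R$, and all coins lighter than $k$ share the cup $L$.

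I expect the main obstacle to be choosing a relabeling whose support is bounded independently of how far $q$ is from $k$. The tempting moves fail: cyclically shifting the whole consecutive run $q, q+1, \ldots, k$ throws the cups out of balance by $k-q-1$ grams, because almost the entire run sits on $L$, and confining the relabeling to $\{q, q+1, \dots, q+4, k\}$ cannot repair the balance once the gap $k-q$ is large. The resolution is to notice that promoting $k-2$ to $k$ changes a cup by the small amount $2$, which is exactly cancelled by the unit change of the adjacent cross-swap $q \leftrightarrow q+1$; this also automatically respects the parity constraint, namely that the new off-scale weight must be congruent to $k$ modulo $2$, and $k-2$ is. Verifying that $q,\,q+1,\,k-2,\,k$ are four genuinely distinct weights---which follows from $q \le k-5$---is the only bookkeeping needed to make the swap well defined.
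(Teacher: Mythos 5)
Your proof is correct and is essentially the paper's own argument: the paper also takes the heaviest coin lighter than $k$ on the opposite cup (calling it $c$ where you write $q$), notes by maximality that $c+1$ sits with the cluster $k-1,\dots,k-4$ and is distinct from $k-2$, and performs exactly your double swap $c \leftrightarrow c+1$ and $k \leftrightarrow k-2$, adding one gram to each cup while moving a different weight off the scale. Your extra remarks on parity and on why smaller-support relabelings fail are harmless additions, not differences in method.
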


\begin{proof}
By Lemma \ref{lemma2}, the four coins $k-1$, $k-2$, $k-3$ and $k-4$
must be on the same cup. Suppose some lighter coin is on the other
cup. Call the heaviest such coin $c$. Then, by choice of $c$, the coin
with weight $c+1$ is on the same cup as the cluster $k-1\dots k-4$,
and is distinct from coin $k-2$. We can therefore swap $c$ with $c+1$ and swap $k$ with
$k-2$. This increases the weight on both cups by 1 gram without
changing how many coins are on each, but moves $k$ onto the scale. The
Baron's audience is again unconvinced.
\end{proof}

\begin{lemma}\label{lemma4}
Theorem \ref{ones} is true for $k \ge 5$.
\end{lemma}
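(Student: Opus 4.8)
The plan is to reduce Lemma \ref{lemma4} to two symmetric clustering facts---all coins lighter than $k$ lie on one cup, and all coins heavier than $k$ lie on one cup---and then glue the two clusters together using the balance condition. Lemma \ref{lemma3} already supplies the light half (for $k \ge 5$), so the first task is to establish its heavy analogue: all coins with weight greater than $k$ sit on a single cup.

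For the heavy half I would mirror Lemmas \ref{lemma1}--\ref{lemma3}. Lemma \ref{lemma1} already records the relevant adjacencies on the heavy side ($k+1$ with $k+2$, and $k+2$ with $k+4$), so I would reprove the analogues of Lemmas \ref{lemma2} and \ref{lemma3} with the signs flipped: to force $k+1,\dots,k+4$ onto a common cup, swap $k+1$ with $k+3$ and $k$ with $k+4$, so that each cup drops by the same amount, the balance and the coin counts are preserved, but $k$ lands on the scale; to extend to all heavier coins, take the lightest heavy coin $c$ on the wrong cup, note that $c-1$ lies with the cluster and is distinct from $k+2$, and swap $c$ with $c-1$ together with $k$ and $k+2$. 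The key caveat---and the main obstacle---is that these mirror arguments need the four-coin cluster $k+1,\dots,k+4$ to exist, i.e. $n \ge k+4$, so the small heavy side must be handled separately.

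I would then dispose of the boundary by hand. If $n = k$ there are no heavy coins, so Lemma \ref{lemma3} drives every remaining coin onto one cup and leaves the other empty, contradicting the balance; hence $n > k$. For $n-k \in \{1,2\}$ the heavy coins form a singleton or a Lemma \ref{lemma1} pair and so already share a cup. The only genuinely fiddly case is $n-k = 3$, where Lemma \ref{lemma1} ties $k+1$ to $k+2$ but leaves $k+3$ free (there is no $k+4$); here I would argue directly that placing $k+3$ opposite $k+1,k+2$ makes balance impossible, splitting into the two sub-cases according to which cup carries the light cluster and checking that balance would force $T_{k-1}+k = 0$ or $T_{k-1} = k$, hence $k = 3$, which $k \ge 5$ excludes.

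Finally I would glue the pieces. Lemma \ref{lemma3} puts all light coins on one cup and the heavy clustering puts all heavy coins on one cup, and these cups must be distinct, since otherwise every coin but $k$ would land on one side and the scale could not balance. Thus one cup is exactly $\{1,\dots,k-1\}$ and the other is exactly $\{k+1,\dots,n\}$, with $k$ alone off the scale. Reading this off yields conditions {\it(i)} and {\it(ii)} with $i = k-1$ and $j = k+1$, condition {\it(iii)} is the balance assumed throughout Case 5, and condition {\it(iv)} holds because $k$ is the unique off-scale coin, completing the proof of Theorem \ref{ones} for $k \ge 5$.
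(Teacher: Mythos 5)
Your proposal is correct and follows essentially the same route as the paper: Lemma~\ref{lemma3} for the light coins, its mirror image for the heavy coins when the coin $k+4$ exists (your sign-flipped swaps are exactly the symmetric arguments the paper invokes), and a separate treatment of the boundary $n \le k+3$, glued together by the observation that the two clusters must occupy distinct cups. The only difference is cosmetic: where you enumerate $n - k \in \{0,1,2,3\}$ and check the balance arithmetic case by case via Lemma~\ref{lemma1}, the paper dispatches all of $n \le k+3$ at once by noting that any falsifying arrangement would put a coin heavier than $k$ on the light-cluster cup and leave at most two coins for the other, forcing $T_{k-1} + (k+1) \le (k+2) + (k+3)$, i.e.\ $k^2 - 3k - 8 \le 0$, hence $k \le 4$.
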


\begin{proof}
By Lemma \ref{lemma3}, all coins lighter than $k$ must be on the same
cup. Further, if a coin with weight $k+4$ exists, then by the
symmetric version of Lemma \ref{lemma3}, all coins heavier than $k$
must also be on the same cup. They must be on the other cup from the
coins lighter than $k$ because otherwise the scale would not balance, and
the theorem is true.

If no coin with weight $k+4$ exists, that is, if $n \le k+3$, how can
the theorem be false? All the coins lighter than $k$ must be on one
cup, and their total weight is $k(k-1)/2$. Further, in order to
falsify the theorem, at least one of the coins heavier than $k$ must
also be on that same cup. So the minimum weight of that cup is now
$k(k-1)/2 + k+1$. But we only have at most two coins for the other
cup, whose total weight is at most $k+2 + k+3 = 2k + 5$. For the scale
to even have a chance of balancing, we must have
$$k(k-1)/2 + k+1 \le 2k + 5 \Leftrightarrow k^2 - 3k - 8 \le 0.$$

Finding the largest root of that quadratic we see that $k < 5$.

So for $k \ge 5$, the collection of all coins lighter than $k$ is
heavy enough that either one needs all the coins heavier than $k$ to
balance them, or there are enough coins heavier than $k$ that the
theorem is true by symmetric application of Lemma \ref{lemma3}.

Completion of Case 5. It remains to check the case for $k < 5$. If $n
> k+3$, then coin $k+4$ exists. If so, all the coins heaver than $k$
must be on the same cup. Furthermore, since $k$ is so small, they will
together weigh more than half the available weight, so the scale will
be unbalanceable. So $k < 5$ and $n \le k+3 \le 7$.

For lack of any better creativity, we will tackle the remaining
portion of the problem by complete enumeration of the possible cases,
except for the one observation that, to balance the scale with just
the coin $k$ off it, the total weight of the remaining coins, 
$n(n+1)/2 - k$, must be even. This observation cuts our remaining work
in half. Now to it.

\textbf{Case 5; Seven Coins: $n = 7$.} Then $5 > k \ge n - 3 = 4$, so
$k = 4$. Then the weight on each cup must be 12. One of the cups must
contain the 7 coin, and no cup can contain the 4 coin, so the only two
weighings the Baron could try are $7 + 5 = 1 + 2 + 3 + 6$, and $7 + 3
+ 2 = 1 + 5 + 6$. But the first of those is unconvincing because $k+1
= 5$ is not on the same cup as $k+2 = 6$, and the second because it
has the same shape as $7 + 3 + 1 = 2 + 4 + 5$ (leaving out the 6-gram
coin instead of the asserted 4-gram coin).

\textbf{Case 5; Six Coins: $n = 6$.} Then $5 > k \ge n - 3 = 3$, and
$n(n+1)/2 = 21$ is odd, so $k$ must also be odd. Therefore $k=3$, and
the weight on each cup must be 9. The 6-gram coin has to be on a cup
and the 3-gram coin is by presumption out, so the Baron's only chance
is the weighing $6 + 2 + 1 = 4 + 5$, but that does not convince his
skeptical audience because it looks too much like the weighing $1 + 3 +
4 = 6 + 2$.

\textbf{Case 5; Five Coins: $n = 5$.} Then $5 > k \ge n - 3 = 2$, and
$n(n+1)/2 = 15$ is odd, so $k$ must also be odd. Therefore $k=3$, and
the weight on each cup must be 6. The only way to do that is the
weighing $5 + 1 = 2 + 4$, which does not convince the Baron's audience
because it looks too much like $1 + 4 = 2 + 3$.

\textbf{Case 5; Four Coins: $n = 4$.} Then the only way to balance a
scale using all but one coin is to put two coins on one cup and one on
the other. The only two such weighings that balance are $1 + 2 = 3$
and $1 + 3 = 4$, but they leave different coins off the scale.

The remaining cases, $n < 4$, are even easier. That concludes the
proof of Case 5.

Consequently, by an argument similar to the one in case 4 we can show
that any number $n$ of coins to which case 5 applies must be the
index of a square triangular number.
\end{proof}

This concludes the proof of Theorem~\ref{ones}.
\end{proof}

\subsection{The Indices of Ones}
\label{sec:one-indices}

While proving the theorem we accumulated descriptions of all possible
numbers of coins that allow the Baron to confirm a coin in one
weighing.  We collect that list here to finish the description of the
indices of ones in Baron M\"{u}nchhausen's sequence $a(n)$.  The
following list corresponds to the five cases in the proof of Theorem
\ref{ones}:

\begin{enumerate}
\item $n$ is a triangular number: $n = T_i$. Then the weighing $1+2+3
  + \ldots + i = n$ proves weight of the $n$-gram coin. For example,
  for six coins the weighing is $1+2+3 = 6$.
\item $n = 2$. The weighing $1 < 2$ proves the weight of both coins.
\item $n$ is a triangular number plus one: $n=T_i+1$. Then the
  weighing $1+2+3 + \ldots + i < n$ proves the weight of the $n$-gram
  coin. For example, for seven coins the weighing is $1+2+3 < 7$.
\item $n$ is the index of a triangular number that is a square plus
  one: $T_n=k^2+1$. Then the weighing $1+2+3 + \ldots + (k-1) < (k+1)
  + \ldots + n$ proves the weight of the $k$-gram coin. For example, the fourth
  triangular number, which is equal to ten, is one greater than a
  square. Hence the weighing $1+2 < 4$ can identify the coin that is
  not on the cup. The next number like this is 25, and the
  corresponding weighing is $1+2+\dots+17 < 19+20 +\dots+25$.
\item $n$ is the index of a square triangular number: $T_n=k^2$. Then
  the weighing $1+2+3 + \ldots + (k-1) = (k+1) + \ldots + n$ proves
  the weight of the $k$-gram coin. For example, we know that the 8th
  triangular number is 36, which is a square: our original problem
  corresponds to this case.
\end{enumerate}

The sequence of indices of ones in the sequence $a(n)$ starts as: 1,
2, 3, 4, 6, 7, 8, 10, 11, 15, 16, 21, 22, 25, 28, 29, 36, 37, 45, 46,
49, 55, 56, 66, 67, 78, 79, 91, 92.

\subsection{Discussion}

If we have four coins, then the same weighing $1+2 < 4$ identifies two
coins: the coin that weighs three grams and is not on the scale and
the coin weighing four grams that is in a cup. The other case like
this is for $n=2$. Comparing the two coins to each other we can
identify both of them. It is clear that there are no other cases like
this. Indeed, for the same weighing to identify two different coins,
it must be the $n$-gram coin on a cup, and the $(n-1)$-gram coin off
the scale. From here we can see that $n$ cannot be very big.

As usual, we want to give our readers something to think about. We
have given you the list of four sequences that correspond to
four cases describing all the numbers for
which the Baron can prove the weight of one coin in one weighing. Does
there exist a number greater than four that belongs to two of these
sequences? In other words, does there exist a total number of coins
such that the Baron can have two different one-weighing proofs for two
different coins?

\section{Two Weighings are Always Enough}\label{sec:twostheorem}

Our main theorem states that Baron M\"{u}nchhausen never needs three
weighings, for two are always enough.

\begin{theorem}\label{twos}
$a(n) \leq 2$.
\end{theorem}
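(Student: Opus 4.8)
The plan is to exhibit, for every $n$ whose value is not already one of the one-weighing indices classified in Theorem~\ref{ones}, an explicit pair of weighings that pins the weight of a single coin. The workhorse is the compounding mechanism already used in the proof of Theorem~\ref{maximum}, which I would isolate as a lemma: if the Baron balances the coins $1,2,\dots,m$ against a single coin on the opposite cup, the audience can only conclude that the lone coin weighs at least $T_m$ (with equality forcing it to be the $T_m$-gram coin); and if instead that lone coin is the heavier one, the audience concludes its weight is at least $T_m+1$. Reusing a coin that has been lower-bounded in this way on the heavy side of a second weighing lets the bound carry forward, exactly as in the three-weighing argument.

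Step one is the bulk of the cases. Two balanced nested weighings of this form pin the heaviest coin whenever $n=T_a+T_b$ is a sum of two triangular numbers. Allowing the off-by-one variant in either weighing (the same slack that produces the $T_i+1$ case of one-weighing proofs) extends the reach to $n=T_a+T_b+1$ and $n=T_a+T_b+2$. Hence this first construction already disposes of every $n$ that lies within $2$ of a sum of two triangular numbers, which is almost all of them; I would record the degenerate coin-reuse provisos (small $m$, small $p$, overlapping indices) exactly as in Theorem~\ref{maximum}.

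The hard core is the remaining $n$, for which none of $n$, $n-1$, $n-2$ is a sum of two triangular numbers; the smallest is $n=54$, since $52,53,54$ are all non-representable. Pinning the heaviest coin by compounding is hopeless here, because each stage of a compounding chain can contribute only a triangular increment (plus the two grams of total off-by-one slack), so the forced lower bound on the heaviest coin can never reach such an $n$. The plan for these $n$ is therefore to pin an \emph{interior} coin instead, using weighings of the shape that drives Cases~4 and~5 of Theorem~\ref{ones}: all of the lightest coins on one cup against all of the heaviest coins on the other, balanced or off by one gram, with the target coin either isolated on a cup or left off the scale for elimination. With two such weighings available I would sandwich the target's weight between a lower bound extracted from one weighing and an upper bound from the other, and/or leave exactly one coin off to finish by elimination, organizing the whole thing as a case analysis on the residue of $n$ relative to the nearest triangular numbers and on the parity of $T_n$ forced by the evenness requirement seen in Case~5.

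The main obstacle is not building these weighings but certifying that they are convincing. For each constructed pair I would have to rule out every alternative assignment of the weights $1,\dots,n$ that reproduces the same observations, and the only tool for that is the ``no weight-preserving perturbation exists'' style of argument used throughout Theorem~\ref{ones} (swap two coins, rotate three coins, shift a consecutive block). Making that verification go through uniformly across all residue classes, rather than for a handful of lucky values of $n$, is where I expect the real work to lie, and it is what justifies separating the argument into a block of preliminaries followed by the main proof.
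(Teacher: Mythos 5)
Your opening step is exactly the paper's Lemma~\ref{lem:sum-two-triangles}, and your identification of $n=54$ as the smallest value it misses is correct. The gap is in your hard core: for the infinitely many $n$ where none of $n$, $n-1$, $n-2$ is a sum of two triangular numbers, you offer a shape (two weighings, each of the prefix-versus-suffix form from Theorem~\ref{ones}, pinning an interior coin by sandwiching and elimination) but no construction, and the shape itself is doubtful. For a single prefix-versus-suffix weighing to force anything on the audience, its counts must be extremal, i.e.\ $T_i$ must equal $T_n - T_{j-1}$ (or miss it by one); these are exactly the decompositions whose scarcity defines the hard $n$ in the first place. For $n=54$ one does have $T_{54} = T_{20} + T_{50}$, so the weighing $[1\ldots 20] = [51\ldots 54]$ forces those two \emph{sets}, but it pins no individual coin (thirty coins sit off the scale, no cup is a singleton), and to finish you would need a second compatible extremal decomposition of $T_n$ or $T_n \pm 1$, which nothing guarantees. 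If instead your weighings are not individually extremal, each admits a vast family of consistent assignments, and certifying the pair requires excluding joint reassignments that alter both weighings simultaneously --- the one-weighing perturbation toolkit of Theorem~\ref{ones} (swaps, rotations of three coins) does not obviously exhaust these, and you correctly flag this as ``where the real work lies'' without supplying it. Your inference that the heaviest coin cannot be pinned for hard $n$ is also a non sequitur: compounding chains are not the only route to coin $n$, and in fact the paper pins coin $n$ in every hard case.

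The paper supplies precisely the idea your sketch lacks. It decomposes $n + T_n = T_a + T_b + T_c$ with $a \leq b \leq c$ (Gauss's three-triangular-number theorem), finds a subset $S \subseteq [a+1 \ldots n-1]$ with $|S| = T_c - n$ (possible because subset sums of an integer range form a contiguous interval; the quantitative case analysis on $a$, $b$, $c$ makes this work for all $n \geq 37$), and from $S$ builds two scattered, overlapping weighings in which coin $n$ appears on the right both times. Certification is then a single extremal count applied to the \emph{sum} of the two weighings: $a$ coins appear on the left twice, $b-a$ coins on the left once, $n-1-c$ coins on the right once, one coin on the right twice, and the coins appearing once on each side cancel; the identity $T_a + T_b = T_{n-1} - T_c + 2n$ shows the scale barely balances even at the extremes, forcing every weight class and hence the $n$-gram coin. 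The residual case $c = n-1$, $b = a+1$ is handled by Lemma~\ref{lem:two-close-triangles}, and $n < 37$ by Lemma~\ref{lem:sum-two-triangles}. This joint-sum extremal argument is what makes the verification uniform in $n$ rather than a residue-by-residue perturbation analysis, and it is absent from your proposal.
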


\section{Notation}\label{sec:notation}

In the proofs of the previous theorems and lemmas, we have already seen
some recurring elements: triangular numbers are important;
contiguous ranges of coins are important.  Additional common elements
will arise in our further explorations, so we introduce some
notation now to more easily manipulate them.

As before the coins are numbered according to their weights, and we
will continue to use the number $i$ to denote an $i$-gram coin on a cup,
using round brackets as before to distinguish a single coin of a weight
we need to compute from two separate coins.  For example, (1+2) means
the 3-gram coin, whereas 1+2 means the 1-gram and the 2-gram coin.

We will also continue to use the notation $T_x$ to denote the $x$th
triangular number $x(x+1)/2$.

We introduce the notation $[x\dots y]$ for the set of all consecutive
coins between $x$ and $y$, inclusive; and we will occasionally
construct weighings with set notation.  Inside expressions in square
brackets we will not parenthesize computations: $[3+4\ldots 11-1]$ is
the set of coins weighing from $7$ to $10$, inclusive, and does not
include the coins $3$, $4$, $11$, or $1$.

If $A$ denotes a set of coins, then $|A|$ denotes the total weight of
those coins (not the cardinality of the set).

When representing a weighing as an equality/inequality we will refer
to the left and right sides of the equality/inequality as the left and
right cups of the weighing, respectively.

\section{Preliminaries}\label{sec:preliminaries}

Before we proceed with the main section of the proof, we will prove
two lemmas that we are going to need, and that will demonstrate the
machinery we will use to prove the main Theorem \ref{twos}.

\begin{lemma} \label{lem:sum-two-triangles}
If $n$, $n-1$, or $n-2$ is a sum of two triangular numbers, then the
Baron can demonstrate the weight of the $n$-gram coin in two
weighings.
\end{lemma}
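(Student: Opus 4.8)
The plan is to recycle the chaining idea from the proof of Theorem~\ref{maximum}, but with only two links instead of three, and to exploit strict inequalities to soak up the extra one or two grams in the $n-1$ and $n-2$ cases. Write the relevant quantity as $m = T_j + T_k$ with $T_j \le T_k$ (so $j \le k$), where $m$ is whichever of $n$, $n-1$, or $n-2$ is assumed to be a sum of two triangular numbers. In every variant the first weighing pins a lower bound on a single coin $c$, and the second weighing reuses $c$ on its light cup together with the coins $1,2,\dots,j$, against the $n$-gram coin, so that the audience's lower bound on that last coin is driven up to $n$.

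Concretely, I would use three variants according to $m$. If $n = T_j + T_k$, both weighings balance:
\[ 1 + 2 + \dots + k = (T_k), \qquad (T_k) + 1 + 2 + \dots + j = n. \]
The first shows the right coin weighs at least $T_k$; reusing it, the second shows the right coin weighs at least $T_k + T_j = n$, hence exactly $n$. If $n - 1 = T_j + T_k$, I keep the first weighing balanced but make the second strict,
\[ 1 + 2 + \dots + k = (T_k), \qquad (T_k) + 1 + 2 + \dots + j < n, \]
so the chained lower bound becomes $T_k + T_j + 1 = n$. If $n - 2 = T_j + T_k$, I make both weighings strict and take the reused coin to be $(T_k+1)$:
\[ 1 + 2 + \dots + k < (T_k + 1), \qquad (T_k + 1) + 1 + 2 + \dots + j < n, \]
which chains to a lower bound of $(T_k + 1) + T_j + 1 = n$. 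In each variant the displayed relation genuinely holds for the true weights, and since the audience uses only the minimum-sum bound ``$r$ distinct coins weigh at least $T_r$'' at each stage, the forced lower bound of $n$ on the final coin, combined with the global upper bound of $n$, determines the $n$-gram coin.

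The main obstacle, exactly as in the proof of Theorem~\ref{maximum}, is the handful of degenerate configurations in which the recipe would ask a coin to sit on both cups of one weighing or weigh a coin against itself. The balanced first weighing collapses when $k \le 1$ (then $T_k = 1$ collides with the coin $1$), and the second weighing goes vacuous when $j = 0$ and $m = T_k = n$ (the $n$-gram coin against itself). I would check that each such degeneracy either forces $n$ to be small---at most $T_1 + T_1 + 2 = 4$ in the worst collision---or coincides with a one-weighing determination (triangular or triangular-plus-one $n$, cf.\ Theorem~\ref{ones}); in all of these $a(n) \le 1$ is already known. The remaining bookkeeping---that the reused coin $c \in \{T_k, T_k+1\}$ is an actual coin ($c \le n$) and is distinct both from each of $1,\dots,j$ and from $n$---follows from $T_k \ge k \ge j$ together with the slack $n - T_k \in \{0,1,2\}$, and I would dispose of it in a sentence.
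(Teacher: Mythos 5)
Your proposal is correct and matches the paper's proof essentially exactly: the same two-link chaining of Theorem~\ref{maximum}, with the same three variants (both weighings balanced for $n = T_j + T_k$, the second strict for $n-1$, and both strict with the reused coin $(T_k+1)$ for $n-2$). Your explicit treatment of the degenerate collisions is a welcome addition, since the paper merely inherits that discussion from the proof of Theorem~\ref{maximum}.
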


\begin{proof}
This is a direct corollary of the argument used to prove
Theorem~\ref{maximum}.  If $n = T_a + T_b$, that argument applies
exactly.  In the other two cases, the Baron can make judicious use of
unbalanced weighings.

If $n = T_a + T_b + 1$, for $a \leq b$, then one of the weighings
needs to be unbalanced, for example
\beas
[1\ldots b] & = & T_b \\ \mbox{}
[1\ldots a] + T_b & < & n.
\eeas
If $n = T_a + T_b + 2$, then both weighings should be unbalanced:
\beas
[1\ldots b] & < & (T_b+1) \\ \mbox{}
[1\ldots a] + (T_b+1) & < & n.
\eeas
\end{proof}

Since triangular numbers are pretty dense among the small integers,
Theorem~\ref{ones} and Lemma~\ref{lem:sum-two-triangles} account for
many small $n$.  This is good, because the main proof in
Section~\ref{sec:main-proof} does not go through for small $n$.  In
particular, the reader is invited to verify that the smallest $n$ that
does not fall under the purview of Lemma~\ref{lem:sum-two-triangles}
is $n = 54$; for example by consulting sequence A020756 in OEIS \cite{OEIS}.

The following covers a special case we will encounter in the main
proof, and coincidentally demonstrates the argument we will use in the
main proof that the complicated weighings we will present will, in
fact, convince the Baron's audience.

\begin{lemma}
\label{lem:two-close-triangles}
If there exists an $a$ such that $2n = T_a + T_{a+1}$, the Baron can
prove the weight of the $n$-gram coin in two weighings.
\end{lemma}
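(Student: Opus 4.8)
The plan is to decode the hypothesis and then drive the argument with the relay (coin-reuse) machinery already used in Theorem~\ref{maximum} and Lemma~\ref{lem:sum-two-triangles}. First I would record what $2n = T_a + T_{a+1}$ actually says. Since $T_a + T_{a+1} = (a+1)^2$, the hypothesis is exactly that $2n$ is a perfect square; in particular $a$ must be odd, so writing $a = 2m-1$ gives $n = 2m^2$, and the two consecutive triangular numbers straddle $n$ symmetrically: $T_a = n - m$ and $T_{a+1} = n + m$, where $m = (a+1)/2$. Because $n$ is the largest weight available, it suffices to produce two weighings after which the audience can prove a lower bound of $n$ on the coin the Baron names; the matching upper bound is automatic.

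The spine of the construction is as in Theorem~\ref{maximum}. A first balanced weighing $1 + 2 + \dots + a = (T_a)$ places $a$ coins against a single coin $c$, so every weight assignment consistent with this outcome must make $c$ weigh at least $1 + 2 + \dots + a = T_a = n - m$, the minimum total of any $a$ distinct coins. The second weighing reuses $c$ on the left against the coin $d$ the Baron claims weighs $n$, and must bridge the residual gap $n - (n-m) = m$. Here the defining identity $n + m = T_{a+1}$ is what I would exploit: the range $[1 \dots a+1]$ has total weight $T_{a+1} = n + m$, so it is exactly $m$ grams heavier than the $n$-coin, which lets the Baron put $d$ against a cup whose least consistent total the audience can compute. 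Chaining the bound ``$c$ weighs at least $n-m$'' from the first weighing into that minimum-total computation for the second forces $d$ to weigh at least $n$. Presenting it this way is deliberate, since it rehearses the convincingness argument the main proof needs: for each cup one writes down the smallest total weight consistent with the observed shape, and checks that the observed balance (or the strict imbalance) can occur only when the named coin sits at its maximum value $n$.

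The step I expect to be the main obstacle is covering the residual gap $m$ for an \emph{arbitrary} $m$. A single extra balanced step contributes only a triangular amount $T_s$ (the least total of $s$ additional light coins), and allowing one or both weighings to be unbalanced buys only $T_s\pm 1$ or $T_s \pm 2$; so the naive chaining succeeds only when $m$ itself lies within two of a triangular number, which is not guaranteed (the first genuinely awkward value is $m = 9$). This is precisely where the factor of two in $2n = T_a + T_{a+1}$ must be \emph{used} rather than merely recorded: the two consecutive triangular numbers have to be played against each other so that the two weighings' minimum-weight bounds meet at $n$, instead of relying on an isolated near-triangular coincidence. In writing the proof I would either engineer the second weighing so that its minimum-total bound is genuinely $T_{a+1}$ rather than $T_a$ plus a triangular remainder, or, as a cleaner fallback, reduce to Lemma~\ref{lem:sum-two-triangles} by checking that $2n = T_a + T_{a+1}$ always allows one of $n$, $n-1$, or $n-2$ to be written as a sum of two triangular numbers. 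Either way, the delicate part, and the reason this lemma is singled out as a warm-up, is verifying that the resulting (complicated) pair of weighings really does convince the audience, through the per-cup minimum-weight computation that the main argument will lean on throughout.
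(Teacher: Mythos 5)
Your reduction of the hypothesis is correct and genuinely useful: $T_a + T_{a+1} = (a+1)^2$, so $n = 2m^2$ with $m = (a+1)/2$, and $T_a = n-m$, $T_{a+1} = n+m$. Your diagnosis of the obstacle is also exactly right: the relay scheme of Theorem~\ref{maximum} can add only $T_s$, $T_s+1$, or $T_s+2$ to an established lower bound, so it bridges the residual gap $m$ only when $m$ is within two of a triangular number, failing first at $m = 9$. But the proposal stops precisely where the lemma's content begins, and neither of your two proposed escapes is carried out. The fallback --- that $2n = T_a + T_{a+1}$ forces one of $n$, $n-1$, $n-2$ to be a sum of two triangular numbers --- is an unproven number-theoretic claim, and structurally suspect: if it held, this lemma would be redundant with Lemma~\ref{lem:sum-two-triangles}, whereas in the main proof it is invoked exactly in the case ($c = n-1$, $b = a+1$) where Lemma~\ref{lem:sum-two-triangles} is not known to apply. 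And ``engineer the second weighing so that its minimum-total bound is genuinely $T_{a+1}$'' names the right goal but supplies no weighing.

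The missing idea is a crossing trick, not a relay. The paper picks two auxiliary coins $x$ and $y$ with $a+1 < x < y = x + m < n$ (one may take $x = a+2$, and the required $y$ exists once $T_a > a+2$, i.e.\ $a \geq 3$, i.e.\ $n \geq 8$; smaller $n$ fall under Lemma~\ref{lem:sum-two-triangles}) and presents the two balanced weighings
\[ [1\ldots a] + y = x + n \qquad \mbox{and} \qquad [1\ldots a+1] + x = y + n, \]
which balance because $y - x = m$ compensates the difference between $T_a$ and $T_{a+1}$ relative to $n$. Convincingness comes from summing the two weighings: $x$ and $y$ appear once on each side and cancel; what remains is $a$ coins used twice on the left, one coin used once on the left, and a single coin used twice on the right. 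The least the left can then weigh is $2T_a + (a+1) = T_a + T_{a+1} = 2n$, and the most the right can weigh is $2n$, so the observed balance forces every extremal assignment simultaneously --- in particular the repeated right-hand coin must weigh $n$. Note that this is not a lower-bound chain carried from the first weighing into the second, as in your plan; it is a single extremal computation on the sum of two weighings (the very template the main theorem later reuses). Without this construction, or some substitute for it, your proposal does not yet prove the lemma.
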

\begin{proof}
We know that $T_a < n < T_{a+1}$ and, in fact, $n = T_a + \frac{a+1}{2}$.
Suppose we can find coins $x$ and $y$ with $a+1 < x < y = x + \frac{a+1}{2} < n$.  Then
the Baron can present the following two weighings:
\[ [1\ldots a] + y = x + n \]
and
\[ [1\ldots a+1] + x = y + n. \]
They will balance by the choice of $x$ and $y$.  Why will they
convince the Baron's audience?

Let the audience consider the sum of the two weighings.  The coins $x$
and $y$ appear on both sides of the sum, so they do not affect the
balance of the total.  Besides them, $a$ coins appeared twice on the
left, and one additional coin appeared once on the left; and this huge
pile of stuff was balanced by just two appearances of a single coin on
the right.  How is this possible?  The least possible total weight of
the left-hand sides (except $x$ and $y$) occurs if the coins that
appeared twice have weights $[1\ldots a]$, and the coin that appeared
once has weight $a+1$, for a total weight of $T_a + T_{a+1}$.  The
greatest possible total weight of the right-hand sides (again
excluding $x$ and $y$) occurs if the solitary coin on the right weighs
$n$ grams.  But the known fact that $2n = T_a + T_{a+1}$ guarantees
that, even in this extreme case the scale will just barely balance; so
any other set of weights would cause the left cup to overbalance the
right in at least one of the weighings Baron M\"{u}nchhausen conducts.  Therefore,
since, in fact, neither left cup overbalanced its corresponding right
cup, the Baron's audience is forced to conclude that the solitary coin
on the right must weigh $n$ grams, as was the Baron's intention.
(Coincidentally, this scenario also proves the weight of the $a+1$
coin.)

Now, when can we find such coins $x$ and $y$?  We can safely take the
$a+2$ coin for $x$.  Then the desired $y$ coin will exist if $n > a +
2 + \frac{a+1}{2}$, which is equivalent to $T_a > a+2$, which holds for $a \geq 3$. The last condition translates into $n \geq 8$.

Smaller $n$ are covered by Lemma \ref{lem:sum-two-triangles}.
\end{proof}

\section{Proof of the Main Theorem}
\label{sec:main-proof}

There are two magical steps.  First, let $a \leq b \leq c$ be such
that
\be T_a + T_b + T_c = n + T_n. \label{eqn:three-triangles}
\ee
By the triangular number theorem, proved by Gauss in his diary, \cite{Gauss, Dunnington}
such a decomposition of $T_n + n$
into three triangular numbers is always possible.  We should remark
at this point that $c > n$ would imply $T_c \geq T_{n+1} > T_n + n$ so
is impossible; and that $c = n$ would imply $T_c = T_n$ so $T_a + T_b = n$,
allowing the Baron to proceed by the method in Lemma~\ref{lem:sum-two-triangles}.
So we can assume $c < n$.

Second, let us try to represent $T_c - n$ as the sum of some subset
$S$ of weights from the range $[a+1 \ldots n-1]$.  Now there are three
non-magical steps.  We will prove that if such a representation exists,
then the Baron can convince his audience of the weight of the $n$-gram
coin in two weighings, by a particular method to be described
forthwith; then we will take some time to study the properties of sums
of subsets of ranges of integers; and then at the last we will
systematically examine possible choices of $a$, $b$, and $c$, and
prove that the above-mentioned subset $S$ really does exist, except in one case,
for which Lemma~\ref{lem:two-close-triangles}
supplies an alternate method of solution.

\subsection{Step 1: What to do with $S$}

\begin{lemma}
Let $a$, $b$, and $c$ satisfy
\[ T_a + T_b + T_c = n + T_n. \]
Let $S$ be a subset of $[a+1 \ldots n-1]$ for which
\[ |S| = T_c - n. \]
Then there exist two weighings that uniquely identify the $n$-gram coin.
\end{lemma}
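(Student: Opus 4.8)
The plan is to mimic the telescoping lower-bound idea from the proof of Theorem~\ref{maximum} and Lemma~\ref{lem:two-close-triangles}, but to route the ``extra'' weight $T_c - n$ through the coins in $S$ so that two weighings suffice instead of three. Recall the setup: we have $T_a + T_b + T_c = n + T_n$ with $a \le b \le c < n$, and a set $S \subseteq [a+1 \ldots n-1]$ with $|S| = T_c - n$. The first thing I would do is write down the two weighings explicitly. The natural candidates, generalizing the unbalanced weighings of Lemma~\ref{lem:sum-two-triangles}, are
\begin{eqnarray*}
[1 \ldots b] + S &=& (\text{some reused coin}), \\
\mbox{}[1 \ldots a] + (\text{that coin}) + (\text{rest}) &=& n,
\end{eqnarray*}
arranged so that the first weighing certifies a lower bound that feeds into the second, and the arithmetic works out to pin the $n$-gram coin. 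The key numerical identity to exploit is that $T_a + T_b = n + T_n - T_c = n - (T_c - n) + T_n - n = \ldots$; I would carefully rewrite $T_a + T_b$ in terms of $n$ and $|S| = T_c - n$ to see exactly what total weight each cup should carry, since $S$ is precisely the slack that makes a two-weighing (rather than three-weighing) telescoping close up.

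Next I would verify that the proposed weighings actually balance (or are off by the intended amount) under the true assignment of weights — this is the routine ``forward'' check using $|S| = T_c - n$ and the defining equation for $a,b,c$. The disjointness conditions are what the constraint $S \subseteq [a+1 \ldots n-1]$ is buying us: because the doubled range is $[1 \ldots a]$ and $S$ avoids $[1 \ldots a]$, no coin is asked to appear twice in a single weighing, and because $S$ avoids $n$ itself, the reused/solitary coins stay distinct from the coins in $S$. I would state these disjointness facts as explicit small observations, since they are exactly the ``barring special cases'' bookkeeping that made Theorem~\ref{maximum} go through.

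The main substance — and the part I expect to be the real obstacle — is the audience-convincing argument, i.e.\ showing these two balanced weighings admit no alternative weight assignment that changes the weight of the claimed $n$-gram coin. Here I would follow the summing trick from Lemma~\ref{lem:two-close-triangles} almost verbatim: have the audience add the two weighings, cancel any coin appearing on both sides, and observe that a large pile of coins (some appearing twice, from $[1\ldots a]$, plus the single-appearance coins from $[1 \ldots b]$ and from $S$) is being balanced by very few appearances on the right. I would then argue that the minimum possible weight of the left pile is achieved exactly when the doubled coins are the lightest possible ($[1 \ldots a]$) and the once-appearing coins take their intended values, while the maximum possible weight of the right is achieved when the solitary reused coin weighs exactly $n$; the identity $T_a + T_b + T_c = n + T_n$ is precisely the statement that these two extremes coincide, forcing equality everywhere and hence pinning the $n$-gram coin. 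The delicate point is handling the coins of $S$ inside this extremal estimate — unlike the clean $[1 \ldots a+1]$ pile in Lemma~\ref{lem:two-close-triangles}, here $S$ is an arbitrary subset, so I would need to check that the lower bound on $|S|$ combined with the lower bounds on the contiguous blocks still sums to exactly $T_n + n$ (no more), which is where the condition $|S| = T_c - n$ does its work. Once that extremal equality is established, the conclusion that the solitary coin must weigh $n$ grams, and therefore that the Baron's audience is convinced, follows exactly as in the preceding lemma.
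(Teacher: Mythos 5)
There is a genuine gap, and it is in the construction of the weighings themselves. Your proposed first weighing, $[1\ldots b] + S = (\text{some reused coin})$, is physically impossible: the left side weighs $T_b + |S| = T_b + T_c - n$, and since $T_c \geq \frac{n+T_n}{3}$ (as $T_c$ is the largest of the three triangular numbers), already $|S| = T_c - n \geq \frac{n^2-3n}{6}$, which exceeds $n$ for all $n > 9$ --- so the set $S$ \emph{alone} outweighs every single coin in the collection, let alone $[1\ldots b] + S$. (The lemma is invoked in the main proof for $n \geq 37$, so this is not an edge case.) More generally, the telescoping lower-bound scheme of Theorem~\ref{maximum} cannot be adapted here, because the intermediate totals $T_b + |S|$ and the like do not correspond to weights of actual coins; the slack $T_c - n$ is enormous, not small. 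Your plan never arrives at a pair of weighings that can actually be placed on a scale, and the hedged ``(rest)'' is exactly where the missing idea lives.

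The idea you are missing is that $S$ must never be placed on a cup as a block; instead both $S$ and its complement $\bar S$ in $[a+1\ldots n-1]$ get split across the two weighings. The paper starts from $T_c = |S| + n$, writes it in coins as $[1\ldots c] = S + n$, and \emph{cancels} the coins of $S \cap [a+1\ldots c]$ appearing on both sides, yielding the legal balanced weighing $[1\ldots a] + \bar S \cap [a+1\ldots c] = S \cap [c+1\ldots n-1] + n$; subtracting this from the coin form of $T_a + T_b = T_{n-1} - T_c + 2n$ yields the second weighing $[1\ldots a] + S \cap [a+1\ldots b] = \bar S \cap [b+1\ldots c] + \bar S \cap [c+1\ldots n-1] + n$. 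Note the structural features your sketch lacks: the $n$-gram coin appears on the \emph{right in both} weighings (it is not a telescoping reused coin), the coins of $\bar S \cap [b+1\ldots c]$ switch sides between the weighings and so cancel in the sum, and $S \cap [b+1\ldots c]$ appears in neither weighing. This produces exactly the appearance pattern ($a$ coins twice on the left, $b-a$ coins once on the left, $n-1-c$ coins once on the right, one coin twice on the right) that makes the extremal identity $2T_a + (T_b - T_a) = (T_{n-1} - T_c) + 2n$ close with equality. To your credit, you correctly identified the convincing mechanism --- the sum-of-weighings extremal argument from Lemma~\ref{lem:two-close-triangles}, which is indeed what the paper uses --- but without the cancellation-based split of $S$ and $\bar S$, there are no valid weighings for that argument to act on, and the construction is the entire content of the lemma.
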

\begin{proof}
Let $\bar S$ denote the complement of $S$ in $[a+1 \ldots n-1]$.  We want to make a useful weighing
out of the assumption about $S$, so let us proceed as follows:
\[ T_c = |S| + n, \]
which we rewrite in terms of weights of coins
\[ [1 \ldots a] + [a+1 \ldots c] = S \cap [a+1\ldots c] + S \cap [c+1\ldots n-1] +n, \]
and cancel coins appearing on both sides to get
\be
[1 \ldots a] + \bar S \cap [a+1\ldots c] = S \cap [c+1\ldots n-1] + n. \label{eqn:shuffle}
\ee
Observe that (\ref{eqn:shuffle}) now forms a legal
weighing; and indeed, let us take it to be the first weighing.

Now we want to make another weighing that will, together with
(\ref{eqn:shuffle}), demonstrate the weight of the $n$-gram coin.
Let us begin by massaging (\ref{eqn:three-triangles}):
\beas
T_a + T_b + T_c & = & n + T_n \\
T_a + T_b & = & T_{n-1} - T_c + 2n \\
T_a + T_b + |\bar S \cap [b+1\ldots c]| & = & |\bar S \cap [b+1\ldots c]| + |[c+1\ldots n-1]| + 2n.
\eeas
Now, converting the last equality into coins and subtracting (\ref{eqn:shuffle}), we get
\be
[1\ldots a] + S\cap[a+1\ldots b] = \bar S\cap[b+1\ldots c] + \bar S\cap [c+1\ldots n-1] + n. \label{eqn:weigh2}
\ee
Again, each coin occurs at most once, so the Baron can legitimately
take (\ref{eqn:weigh2}) as his second weighing.

We have just shown that (\ref{eqn:shuffle}) and (\ref{eqn:weigh2})
represent four sets of coins that can be weighed against each other in
the indicated pattern, and that the scale will balance if they are.
Now, why do these two weighings uniquely identify the $n$-gram coin?
Consider which coins appear on which sides of those two equations.
Let $L_1$ and $R_1$ be the left- and right-hand sides of the first weighing (\ref{eqn:shuffle}),
respectively, and likewise $L_2$ and $R_2$ for the second weighing (\ref{eqn:weigh2}).
Also, let $O_1$ and $O_2$ be the sets of coins that do not participate
in (\ref{eqn:shuffle}) and (\ref{eqn:weigh2}), respectively.
Then
\beas
              [1\ldots a]   & = & L_1 \cap L_2, \\
\bar S \cap [a+1\ldots b]   & = & L_1 \cap O_2, \\
     S \cap [a+1\ldots b]   & = & O_1 \cap L_2, \\
     S \cap [b+1\ldots c]   & = & O_1 \cap O_2, \\
\bar S \cap [b+1\ldots c]   & = & L_1 \cap R_2, \\
\bar S \cap [c+1\ldots n-1] & = & O_1 \cap R_2, \\
     S \cap [c+1\ldots n-1] & = & R_1 \cap O_2, \\
                       n    & = & R_1 \cap R_2.
\eeas

Seeing the two weighings (\ref{eqn:shuffle}) and (\ref{eqn:weigh2}), Baron M\"{u}nchhausen's audience reasons analagously
to how they did in the proof of Lemma~\ref{lem:two-close-triangles}.
They consider the sum of the two weighings, which tells them
\[ |L_1| + |L_2| = |R_1| + |R_2|. \]
They see that some coins, namely $L_1 \cap R_2$, (which the Baron
knows to be $\bar S \cap [b+1\ldots c]$) appeared first on the left
and then on the right, so those coins do not affect the balance of
the sum.  The audience also sees that 
\begin{enumerate}
\item $a$ coins appeared on the left both times ($L_1 \cap L_2$);
\item $b-a$ coins appeared on the left once and never on the right
  ($(L_1 \cap O_2) \cup (O_1 \cap L_2)$);
\item $n-1-c$ coins appeared on the right once and never on the left
  ($(R_1 \cap O_2) \cup (O_1 \cap R_2)$); and
\item just one coin appeared on the right both times ($R_1 \cap R_2$).
\end{enumerate}
Now, $a$ and $b-a$ are going to be much bigger than $n-c-1$ and $1$,
so the audience will be surprised that so many coins can be balanced by
so few.  And they will wonder how to minimize the total weight
\[ 2|L_1 \cap L_2| + |(L_1 \cap O_2) \cup (O_1 \cap L_2)| \]
of the many, and how to maximize the total weight
\[ |(R_1 \cap O_2) \cup (O_1 \cap R_2)| + 2|R_1 \cap R_2| \]
of the few.  And they will see that to do this, they must
\begin{enumerate}
\item let the coins in $L_1 \cap L_2$ have the weights $[1\ldots a]$,
  as they occur on the left twice;
\item let the coins in $(L_1 \cap O_2) \cup (O_1 \cap L_2)$ have the
  weights $[a+1\ldots b]$, as they occur on the left once;
\item let the coins in $(R_1 \cap O_2) \cup (O_1 \cap R_2)$ have the
  weights $[c+1\ldots n-1]$, as they occur on the right once; and
\item let the sole coin in $R_1 \cap R_2$ have weight $n$, as it occurs
  on the right twice.
\end{enumerate}
And then they will see from (\ref{eqn:three-triangles}), which
can be rewritten as
\beas
T_a + T_b + T_c & = & T_n + n \\
T_a + T_b & = & T_{n-1} - T_c + 2n \\
2|[1\ldots a]| + |[a+1\ldots b]| & = & |[c+1\ldots n-1]| + 2n
\eeas
that even if they minimize the left and maximize the right, the scale
will just barely balance.  And then they will know that any other
weights than those would have made the left heavier than the right,
and since the scale did balance, those are the weights that must have
been, and they will wonder in awe at the Baron's skill in convincing
them of the weight of his chosen coin out of $n$ in only two
weighings.
\end{proof}

We have established that the existence of a subset $S$ of $[a+1\ldots
  n-1]$ that adds up to $|S| = T_c - n$ suffices to let the Baron
convince his audience of the weight of the coin labeled $n$ in two
weighings.  Now, when does such a subset reliably exist?

\subsection{Step 2: Sums of subsets of ranges}

To answer this question, let us study the behavior of sums of subsets
of ranges of positive integers in general.  The results of this
segment probably generalize to negative integers and beyond, and are
probably published in many books, but we decided that it is faster to
derive them ourselves than drive to the library.

Suppose we have
some range of integers $[s\ldots t]$.  What are the possible sums of
its subsets?  First, what are the possible sums of subsets of a fixed
size, say $k$?  Well, the smallest sum of $k$ elements of $[s\ldots t]$
is of course the sum of the $k$ smallest elements of $[s\ldots t]$:
\[ s + (s+1) + \ldots + (s+(k-1)) = ks + T_{k-1} = T_{s+k-1} - T_{s-1}. \]
The largest sum of $k$ elements of $[s\ldots t]$ is of course
the sum of the $k$ largest elements of $[s\ldots t]$:
\[ t + (t-1) + \ldots + (t-(k-1)) = kt - T_{k-1} = T_t - T_{t-k}. \]
What is more, given any subset $K$ of $k$ elements of $[s\ldots t]$
that are not the $k$ largest, we can change one of them for an element
one larger that was not in $K$, thus producing a subset whose sum is larger
by one.  Since we can walk all the way from the $k$ smallest elements to
the $k$ largest elements by increments of one, the possible sums cover the
whole range between the least and the greatest possible values, and we
have just proven
\begin{lemma}
The set of possible sums of subsets of size $k$ of a range $[s\ldots
  t]$ is exactly the range $[ks + T_{k-1} \ldots kt - T_{k-1}]$.
\end{lemma}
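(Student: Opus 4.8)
The plan is to prove two things: \emph{containment}, that every size-$k$ subset sum lies in the stated range; and \emph{surjectivity}, that every integer in the range is realized. Containment is immediate once the endpoints are pinned down. The smallest sum of $k$ elements of $[s\ldots t]$ is attained by the $k$ smallest elements and equals $ks + T_{k-1}$, while the largest is attained by the $k$ largest and equals $kt - T_{k-1}$, exactly as the displayed computations above record. Since any size-$k$ subset sum is at least the minimum and at most the maximum, all such sums land in $[ks + T_{k-1}\ldots kt - T_{k-1}]$. The real substance of the lemma is therefore that the achievable sums leave no gaps.

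For surjectivity, the tool is a local move that raises a subset's sum by exactly one. Given a size-$k$ subset $K$, I would look for an element $x \in K$ with $x < t$ and $x+1 \notin K$; replacing $x$ by $x+1$ preserves the cardinality $k$ and increases the sum by precisely $1$. The crucial observation is that such an $x$ exists \emph{unless} $K$ is the set of $k$ largest elements, the top block $\{t-k+1,\ldots,t\}$. Indeed, if no such $x$ exists, then every element of $K$ strictly below $t$ has its successor also in $K$; chasing this up from any element forces $K$ to be a block of consecutive integers terminating at $t$, and since $K$ has $k$ elements that block must be exactly $\{t-k+1,\ldots,t\}$. Contrapositively, every $K$ other than the top block admits a sum-increasing swap.

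With this move in hand I would finish by a monotone walk. Starting from the bottom subset $\{s,\ldots,s+k-1\}$, whose sum is the minimum $ks+T_{k-1}$, I repeatedly apply the increment-by-one swap. Each application raises the sum by exactly one and remains available as long as the current subset is not the top block; since the sum strictly increases and is bounded above by $kt-T_{k-1}$, the walk is finite and must terminate precisely at the top block, whose sum is the maximum. Consequently the sums visited along the walk run through $ks+T_{k-1},\ ks+T_{k-1}+1,\ \ldots,\ kt-T_{k-1}$ without skipping, so every integer in the range is achieved, which together with containment gives the claimed equality. The degenerate cases $k=0$ and $k=t-s+1$ collapse the range to a single point and are covered by the same reasoning.

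The main obstacle, and really the only step beyond routine arithmetic, is the characterization underlying the swap move: verifying that the top block is the \emph{unique} size-$k$ subset from which no sum-increasing swap is possible. Everything else — the endpoint formulas, containment, and the termination of the walk by monotonicity — is bookkeeping once that characterization is in place.
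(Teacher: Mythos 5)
Your proof is correct and takes essentially the same route as the paper's: compute the extremal sums $ks+T_{k-1}$ and $kt-T_{k-1}$, then walk from the bottom block to the top block via swaps of some $x\in K$ for $x+1\notin K$, each raising the sum by exactly one. The only difference is that you spell out the detail the paper merely asserts --- that the top block $\{t-k+1,\ldots,t\}$ is the unique size-$k$ subset admitting no such swap --- which is a welcome but minor elaboration, not a different argument.
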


Now, what about the overall behavior of subsets of any size?  Well,
subsets of size $k$ form a contiguous range, and subsets of size $k+1$
also form a contiguous range.  Do those ranges join to form a larger
range, or is there a gap?  In other words, is one plus the maximum sum
of subsets of size $k$ a possible sum of subsets of size $k+1$?  This
will be true if and only if replacing the $k$ largest elements of
$[s\ldots t]$ with the $k+1$ smallest does not increase the sum by
more than 1, or
\be kt - T_{k-1} + 1 \geq (k+1)s + T_k. \label{eq:rangeoverlap} \ee

Moreover, if turning the $k$ largest elements into the $k+1$ smallest
elements does not cause an increase exceeding 1, the same will hold
for $k+1$, as long as $k$ is less than the middle point of the segment
$[s\ldots t]$: $k < \frac{t-s}{2}$. Indeed if we sum up the inequality
(\ref{eq:rangeoverlap}) with $t-k \geq s + k+1$, we get
\[ (k+1)t - T_{k} + 1 \geq (k+2)s + T_{k+1}, \]
the condition that the next two ranges overlap.

This means that if the
$k$-subset range overlaps the $k+1$-subset range, then all larger
ranges will also overlap, at least until size $\frac{t-s}{2}$.  Also,
subsets of size greater than $\frac{t-s}{2}$ overlap symmetrically to
their smaller counterparts, because the sum of any such subset is just
the total sum of all numbers between $s$ and $t$ minus the sum of the
complement of that subset.  This demonstrates
\begin{lemma}
If $k < (t-s)/2$ is such that 
\[ kt - T_{k-1} + 1 \geq (k+1)s + T_k, \]
then the possible sums of subsets of sizes $[k\ldots t-s-k]$ create a
contiguous range.  In other words, it is possible to find subsets of the
range $[s\ldots t]$ that sum up to any number between $ks +T_{k-1}$
and $(t-s-k)t-T_{t-s-k-1}$.
\end{lemma}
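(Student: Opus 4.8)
The plan is to build the big range as a union of the fixed-size ranges supplied by the preceding lemma, and to show that consecutive ones abut with no gap. For each size $m$, write $I_m = [ms + T_{m-1},\, mt - T_{m-1}]$ for the contiguous set of achievable sums of $m$-element subsets of $[s\ldots t]$. The set of sums of subsets whose size lies in $[k\ldots t-s-k]$ is exactly $\bigcup_{m=k}^{t-s-k} I_m$, so it suffices to show $\max I_m + 1 \geq \min I_{m+1}$, that is, that the condition
\[ C(m):\quad mt - T_{m-1} + 1 \geq (m+1)s + T_m \]
holds for every $m$ with $k \leq m \leq t-s-k-1$; this is precisely the overlap condition (\ref{eq:rangeoverlap}) with $k$ replaced by $m$. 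Once all these hold, the intervals chain into the single range from $\min I_k = ks + T_{k-1}$ to $\max I_{t-s-k} = (t-s-k)t - T_{t-s-k-1}$, which are exactly the claimed endpoints.

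The hypothesis is precisely $C(k)$, so the remaining work is to propagate $C$ across the whole index range. First I would push upward from the small side: the monotonicity computation carried out just before the statement shows that $C(m)$ together with $m < (t-s)/2$ implies $C(m+1)$, so a straightforward induction starting from $C(k)$ yields $C(m)$ for every integer $m$ with $k \leq m \leq (t-s)/2$. For the indices above the midpoint I would exploit the complementation symmetry of $[s\ldots t]$: sending a subset to its complement is an order-reversing bijection from $m$-element subsets to $(t-s+1-m)$-element subsets that subtracts each sum from the fixed total $T_t - T_{s-1}$. Hence $I_m$ is the reflection of $I_{t-s+1-m}$ about $(T_t - T_{s-1})/2$, and the gap between $I_m$ and $I_{m+1}$ reflects to the gap between $I_{t-s-m}$ and $I_{t-s-m+1}$; in other words $C(m) \Leftrightarrow C(t-s-m)$. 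Applying this equivalence to the conditions already obtained on the small side converts them into $C$ for all the indices on the large side, completing the chain across $[k\ldots t-s-k]$.

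I expect the one delicate point to be the bookkeeping at the midpoint: checking that the upward induction and its mirror image meet with no skipped index, for both parities of $t-s$. When $t-s$ is even the induction reaches the self-mirror index $(t-s)/2$ directly, and reflection then supplies every index above it; when $t-s$ is odd the two central indices $\tfrac{t-s-1}{2}$ and $\tfrac{t-s+1}{2}$ are each other's mirrors, so the induction reaching the upper of the two hands off cleanly to the reflected chain. A short case split on the parity of $t-s$ settles this and shows that no index in $[k\ldots t-s-k-1]$ is missed. Everything else is routine: verifying that the displayed endpoints coincide with $\min I_k$ and $\max I_{t-s-k}$, and noting that $t-s-k-1 \geq k$ (which follows from the hypothesis $k < (t-s)/2$) so that the index range is nonempty.
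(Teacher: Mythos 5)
Your proposal is correct and follows essentially the same route as the paper: chaining the contiguous fixed-size ranges $I_m$ via the overlap condition (\ref{eq:rangeoverlap}), propagating it upward by the induction step established just before the lemma, and handling sizes past the midpoint by the complementation symmetry $C(m) \Leftrightarrow C(t-s-m)$. Your explicit parity bookkeeping at the midpoint is a welcome tightening of a step the paper treats only in one brisk sentence.
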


Considering the possibility that ranges may start overlapping from
$k=1$, that is $t - T_0 + 1 \geq 2s + T_1$, leads us to
\begin{corollary}
\label{cor:big-range}
If $t+1 \geq 2s+1$, or, equivalently, $s \leq t/2$, the
subsets of the range $[s\ldots t]$ can achieve any sum in
\[ [s\ldots T_t - T_s]. \]
\end{corollary}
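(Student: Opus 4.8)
The plan is to obtain the corollary by specializing the preceding Lemma to $k=1$, together with a small amount of extra bookkeeping at the very top of the range. The first step is to check that the hypothesis is nothing other than the $k=1$ instance of the overlap criterion: substituting $T_0=0$ and $T_1=1$ into $kt-T_{k-1}+1\ge(k+1)s+T_k$ collapses it to $t+1\ge 2s+1$, which is exactly $s\le t/2$. So under our hypothesis the size-$1$ and size-$2$ sum-ranges already meet, and hence, by the propagation established in the Lemma, every pair of consecutive size-ranges meets up through the middle size and then symmetrically back down.

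Next I would apply the Lemma with $k=1$ verbatim. It tells us that the subsets of sizes $1$ through $t-s-1$ realize every integer in one contiguous block, whose bottom endpoint is the singleton $\{s\}$ of weight $s$ and whose top endpoint is the heaviest $(t-s-1)$-element subset $[s+2\ldots t]$, of weight $(t-s-1)t-T_{t-s-2}=T_t-T_{s+1}$. Thus the Lemma by itself already hands us every sum in $[s\ldots T_t-T_{s+1}]$.

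The step I expect to be the real obstacle is that this block stops just short of the claimed endpoint, since $T_t-T_s$ is larger than $T_t-T_{s+1}$ by $s+1$ and is the weight of the $(t-s)$-element set $[s+1\ldots t]$, a size the Lemma never reaches. I would patch this by hand. Subsets of size $t-s$ are precisely the complements of single coins, so their sums fill the contiguous block running from $|[s\ldots t]|-t$ up to $|[s\ldots t]|-s=T_t-T_s$. A one-line computation then shows that the bottom of this new block, namely $T_t-T_s+s-t$, is at most one more than the previous top $T_t-T_{s+1}=T_t-T_s-s-1$ exactly when $t\ge 2s$, i.e.\ under the very hypothesis we are given. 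Hence the two blocks abut, their union is the single interval $[s\ldots T_t-T_s]$, and the corollary is proved.

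Finally, I would remark on why one should not hope for more: the range is deliberately not pushed to the full sum $|[s\ldots t]|=T_t-T_{s-1}$, because deleting any single coin from the complete set removes at least $s$ grams, so when $s\ge 2$ no subset sum lands strictly between $T_t-T_s$ and the total. The endpoint $T_t-T_s$ is thus the correct and best-possible place to stop.
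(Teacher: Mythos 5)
Your proof is correct, and it is actually more careful than the paper's own derivation, which obtains the corollary simply by specializing the preceding lemma at $k=1$ (``considering the possibility that ranges may start overlapping from $k=1$'') with no further comment. You rightly notice that this specialization, read literally, only covers subsets of sizes $1$ through $t-s-1$ and hence yields the interval $[s\ldots T_t - T_{s+1}]$, which stops $s+1$ short of the claimed endpoint $T_t - T_s$; the missing top block comes from subsets of size $t-s$, the complements of singletons, whose sums form the contiguous block $[T_t - T_{s-1} - t \ldots T_t - T_s]$, and your check that this block abuts the previous one precisely when $t \geq 2s$ --- i.e.\ exactly under the corollary's hypothesis --- is correct. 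So your ``patch'' closes a genuine off-by-one that the paper glosses over: the complementation symmetry invoked in the lemma's proof pairs size $k$ with size $t-s+1-k$, so the lemma could have been stated for sizes $[k\ldots t-s+1-k]$, and both of the paper's corollaries implicitly use this extended form (the same discrepancy appears in Corollary~\ref{cor:small-range}, whose endpoint $T_t - T_{s+1}$ corresponds to sizes up to $t-s-1$, not $t-s-2$). One small caveat you inherit from the paper rather than introduce: the lemma's hypothesis $k < (t-s)/2$ fails for $k=1$ when $t-s \leq 2$, so under $s \leq t/2$ the degenerate ranges $[1\ldots 2]$, $[1\ldots 3]$, and $[2\ldots 4]$ need a direct (trivial) verification; your argument is no worse than the paper's on this point. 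Your closing observation that $T_t - T_s$ is the best possible endpoint of contiguity when $s \geq 2$, because deleting any coin from the full set drops the sum by at least $s$, is a nice addition the paper does not make.
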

Considering the possibility that ranges may start overlapping from
$k=2$, that is $2t - T_1 + 1 \geq 3s + T_2$, leads us to
\begin{corollary}
\label{cor:small-range}
If $2t \geq 3s+3$, or, equivalently, $s \leq \frac{2}{3}t - 1$, the
subsets of the range $[s\ldots t]$ can achieve any sum in
\[ [2s+1\ldots T_t - T_{s+1}]. \]
\end{corollary}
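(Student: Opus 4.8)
The plan is to obtain this statement as the $k = 2$ specialization of the preceding Lemma, in exact parallel to the way Corollary~\ref{cor:big-range} arises from the case $k = 1$. First I would substitute $k = 2$ into the overlap hypothesis $kt - T_{k-1} + 1 \geq (k+1)s + T_k$. Since $T_1 = 1$ and $T_2 = 3$, the left-hand side becomes $2t - 1 + 1 = 2t$ and the right-hand side becomes $3s + 3$, so the hypothesis collapses to exactly $2t \geq 3s + 3$, which is equivalently the stated condition $s \leq \tfrac{2}{3}t - 1$. This is the trigger the Lemma needs: it certifies that the range of sums of two-element subsets abuts the range of sums of three-element subsets.

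Next I would read off the two endpoints of the guaranteed contiguous range. The lower endpoint is immediate: the smallest sum of a two-element subset of $[s\ldots t]$ is $s + (s+1) = 2s + 1$, matching the left end of the claimed interval. The upper endpoint is where a little genuine work is needed, because the Lemma's interior argument only chains the sizes up to the midpoint; to reach the top I would invoke the complementation symmetry. Every two-element subset has a complement of size $t - s - 1$, so the contiguous band of admissible sizes runs from $2$ up through $t - s - 1$, and the largest attainable sum is the total weight of the whole range minus the smallest two-element sum. The total is $T_t - T_{s-1}$, giving a maximum of $(T_t - T_{s-1}) - (2s + 1)$; and since $T_{s+1} = T_{s-1} + (2s+1)$, this is exactly $T_t - T_{s+1}$, the right end of the claimed interval.

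The step I expect to be the real obstacle is not either endpoint computation but the verification that the overlaps genuinely chain from size $2$ all the way through the midpoint and back down to size $t - s - 1$ with no gap opening in the middle. For this I would lean on the propagation argument already established for the Lemma: once the overlap inequality holds at $k = 2$, adding it to $t - k \geq s + k + 1$ shows it persists at $k + 1$ for every $k < (t-s)/2$, after which the complementation argument mirrors the lower sizes onto the upper ones. The care required is purely at the boundary --- when $t - s$ is small the chain is short (in the extreme it is just the single size $k = 2$) --- so I would confirm on the smallest admissible ranges that the claimed interval degenerates gracefully, to a short interval or even a single point, rather than leaving a hole, which closes the argument.
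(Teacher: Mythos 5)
Your proposal is correct and takes essentially the same route as the paper, which obtains this corollary precisely by substituting $k=2$ into the overlap condition $kt - T_{k-1} + 1 \geq (k+1)s + T_k$ of the preceding lemma, exactly as you do. Your extra care with the upper endpoint via complementation (extending the band of sizes up to $t-s-1$, so the maximum is $(T_t - T_{s-1}) - (2s+1) = T_t - T_{s+1}$) is a sound reading of what the paper leaves implicit, since the lemma's literal endpoint $(t-s-k)t - T_{t-s-k-1}$ at $k=2$ would only give $T_t - T_{s+2}$.
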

These two facts will prove invaluable to
characterizing when $T_c-n$, from above, can be achieved as the sum of
some set of coins from a given range.

\subsection{Step 3: Systematic study of possibilities for $a$, $b$, and $c$}

We are now ready to finish this proof.  Recall the setup: The Baron
has $n$ coins; we have made a decomposition into three triangular numbers
\[ n + T_n = T_a + T_b + T_c, \ \ \ a \leq b \leq c; \]
and we know that if we can find a subset $S$ of $[a+1\ldots n-1]$
for which
\[ T_c - n = |S|, \]
the Baron can convince his audience of the weight of the $n$-gram coin in two
weighings.  We also know, from the corollaries above, that
\begin{enumerate}
\item If $2a+3 \leq n$, sums of subsets of $[a+1\ldots n-1]$ include
the range $[a+1\ldots T_{n-1} - T_{a+1}]$, and
\item If $3a+6 \leq 2n-2$, sums of subsets of $[a+1\ldots n-1]$ include
the range $[2a+3\ldots T_{n-1} - T_{a+2}]$.
\end{enumerate}

Here is the main idea for the remainder of this section, and with it,
the proof: As $T_a$ is the smallest number in our decomposition, we
know that $T_a \leq \frac{T_n + n}{3} < \frac{T_{n+1}}{3}$.  We can
conclude from this that $a < \frac{n+1}{\sqrt{3}}$.  Since
$\frac{n+1}{\sqrt{3}}$ grows slower than $\frac{2n}{3}$, for large $n$
we expect the condition $3a+6 \leq 2n-2$ in Corollary
\ref{cor:small-range} to hold.  Then it will suffice to prove that
$T_c - n$ falls into the range $[2a+3\ldots T_{n-1} - T_{a+2}]$.  In
general, the lower bound will be easy; and for the upper bound we will
find that if $T_c$ is large, then $T_a$ will be small, so this
analysis will cover all but a few possibilities for $c$; but these few
will be extreme enough and few enough to handle directly.

Now to it.  Since $a < \frac{n+1}{\sqrt{3}}$,
\[ 3a + 6 < (n+1)\sqrt{3} + 6. \]
For $n \geq 37$,
\[ (n+1)\sqrt{3} + 6 \leq 2n - 2. \]
Combining these two we get the desired
\[ 3a + 6 < 2n - 2, \]
so Corollary~\ref{cor:small-range} applies.  Subsets of $[a+1\ldots n-1]$
can take on all sums in the range $[2a+3\ldots T_{n-1} - T_{a+2}]$.

Does $T_c - n$ fall into this range?  For the upper bound, we have the
sequence of equivalent inequalities, starting with the desired one
\beas
T_c - n & \leq & T_{n-1} - T_{a+2} \\
T_a + T_c + (a+1) + (a+2) & \leq & T_n \\
T_a + T_c + (a+1) + (a+2) + n & \leq & T_n + n = T_a + T_b + T_c \\
n + 2a+3 & \leq & T_b.
\eeas

On the other hand, if $n + 2a+3 \leq T_b$, then
\be T_c - n \geq T_b - n \geq 2a + 3. \ee

So for $n \geq 37$, as long as $T_b \geq n+2a+3$, a subset
$S$ of $[a+1\ldots n-1]$ can be found that sums to $T_c - n$, permitting the
Baron to convince his audience of the weight of the coin labeled $n$.

When can we guarantee that $T_b \geq n+2a+3$? 
We know that $a < \frac{n+1}{\sqrt{3}}$, 
so it is enough to guarantee that 
\[ T_b \geq n + \frac{2(n+1)}{\sqrt{3}} + 3. \]
As $T_b \geq T_a$, it is enough to guarantee that 
\[ T_a + T_b = T_n - T_c + n \geq 2n + \frac{4(n+1)}{\sqrt{3}} + 6. \]

If $c \leq n-4$, then $T_n - T_c + n \geq 5n-6$. For $n \geq 37$,
$5n-6 > 2n + \frac{4(n+1)}{\sqrt{3}} + 6$, so $T_c-n$ does, in fact,
fit into the desired range.

It now remains to analyse the cases when $c > n-4$. As remarked earlier,
$c > n$ is impossible, and $c = n$ implies that $n = T_a + T_b$, so
two weighings suffice by Lemma~\ref{lem:sum-two-triangles}. So we are
left with three cases: $c = n-1$, $c = n-2$ and $c = n-3$. For such
$c$, $T_c$ is at least $T_{n-3}$, so
\[ T_a + T_b \leq 2n + (n-1) + (n-2) = 4n-3. \]
Therefore
$T_a \leq 2n - \frac{3}{2}$.  For $n \geq 21$, this implies $2a+3 \leq
n$. This fact allows us to use Corollary \ref{cor:big-range},
meaning that we have full use of the range $[a+1\ldots T_{n-1} -
  T_{a+1}]$.

Does $T_c - n$ fall into this range? For $n \geq 21$, the
lower bound follows from
\[ T_c - n \geq T_{n-3} - n \gg n > a + 1. \]

We prove the upper bound case by case.

\textbf{Case 1.} $c = n-3$.  We can rearrange the
upper bound condition
\[ T_{n-3} - n = T_c - n \leq T_{n-1} - T_{a+1} \]
to
\be T_a + a+1 \leq n + (n-1) + (n-2) = 3n - 3. \label{eqn:want-n-3} \ee
For $n$ this large, $2a+3 \leq n$ generously
implies
\[ a + 1 \leq n - \frac{3}{2}, \]
which, together with the known $T_a \leq 2n - \frac{3}{2}$, implies
(\ref{eqn:want-n-3}), so $S$ exists and the Baron succeeds.

\textbf{Case 2.} $c = n-2$.  Then $T_a + T_b = 3n-1$; therefore $T_a
\leq \frac{3n-1}{2}$. For the upper bound, we want
\[ T_{n-2} - n = T_c - n \leq T_{n-1} - T_{a+1}, \]
which rearranges to
\[ T_a + a+1 \leq n + (n-1). \]
Since we know $T_a \leq \frac{3n-1}{2}$, it suffices that 
\[ a+1 \leq \frac{n-1}{2}, \]
which is mercifully equivalent to the already established
condition $2a+3 \leq n$.  Therefore, the desired subset $S$
exists and the Baron succeeds.

\textbf{Case 3.} $c = n-1$.  Then $T_a + T_b = 2n$; therefore $T_a \leq
n$.  If $b = a$, then $T_a = n$ and the Baron succeeds in one weighing.
If $b = a+1$, then the Baron succeeds in two weighings by
Lemma~\ref{lem:two-close-triangles}.  

Now let us assume that $b \geq a+2$.  Therefore, $T_b \geq T_{a+2} >
T_a + (a+1) + (a+1)$.  Therefore 
\beas
2n = T_a + T_b & > & 2(T_a + (a+1)) \\
T_{a+1} & < & n \\
0 & < & n - T_{a+1} \\
T_c & < & n + T_{n-1} - T_{a+1} \\
T_c - n & < & T_{n-1} - T_{a+1},
\eeas
so $T_c - n$ fits in the desired range, the desired
subset $S$ exists, and the Baron succeeds.

The argument above proves that the Baron can convince his audience of
the weight of the $n$-gram coin among $n$ coins for $n \geq 37$.  The
theorem is completed by noting that
Lemma~\ref{lem:sum-two-triangles} covers all smaller $n$.

\section{Discussion}\label{sec:discussion}

Is it surprising that the answer is two?  That no matter how many
coins there are, Baron M\"{u}nchhausen can always prove the weight of
one of them in just two weighings on the scale?  It surprised us and
it surprised most people we gave this puzzle to.  At first, everyone
expects that this sequence should tend to infinity, or at least grow
without bound.

So we were most intrigued when we proved
Theorem~\ref{maximum} and discovered that the problem always has a
simple solution in three weighings. On reflection, however, maybe that discovery should have been less of
a surprise.  In standard coin-weighing puzzles, the person
constructing the weighings is trying to find something out; so they
are limited if nothing else by information-theoretic considerations,
and as the number of coins involved increases, the problem usually
becomes unequivocally more difficult.  In this puzzle, however, Baron
M\"{u}nchhausen has complete information.  So on the one hand, as the
number of coins increases, the audience knows less and the Baron's
task becomes more difficult; but on the other hand, the available
resources for constructing interesting weighings also grow.  In this
case, it turns out that these forces balance to produce a bounded
sequence.

In fact, demonstrating the weight of one coin among $n$ in two
weighings is \emph{easy}.  Think about what the Baron actually needs
to do to satisfy the conditions outlined in the beginning of the
proof, in Section~\ref{sec:main-proof}.  He must find some
decomposition of $n + T_n$ into three triangular numbers, and find
some subset of a certain collection of his coins that adds up to some
number.  The proof is long and hairy because we are trying to prove
that this subset \emph{always} exists, but the vast majority of the
time this is trivial.  How many ways are there to pick a subset of
integers from fifty to a hundred, so that their sum will be three
thousand?  Or three thousand one?  Gazillions!\footnote{Yes,
  ``gazillions'' is a technical term in advanced combinatorics.}  If
the range one has to work with is reasonably large, and the target sum
is comfortably between zero and the total sum of all the integers in
one's range, then of course one can find a subset, or a hundred
subsets.

Even more, any number $n$ generally has \emph{many} decompositions into
a sum of three triangular numbers---on the order of the square root of $n$ \cite{Grosswald}.  The proof in
Section~\ref{sec:main-proof} is hairy also because we were proving
that an \emph{arbitrary} decomposition of $n + T_n$ into three triangular
numbers leads to a solution, but in practice the Baron has the freedom
to pick and choose among a great number of possible decompositions.

The ease of proving one coin in two weighings suggests two future
directions.  One can explore how many ways there are for Baron
M\"{u}nchhausen to prove himself right.  One can also explore harder
tasks that can be asked of him.

For the outermost example,
we can ask the Baron to prove the weight of \emph{all} the given coins. For
$n=6$ this task will match the following puzzle, authored by Sergey
Tokarev \cite{tokarev}, that appeared at the last round of the Moscow Math
Olympiad in 1991:

\begin{quote}
    You have 6 coins weighing 1, 2, 3, 4, 5 and 6 grams that look the
    same, except for their labels. The number (1, 2, 3, 4, 5, 6) on
    the top of each coin should correspond to its weight. How can you
    determine whether all the numbers are correct, using the balance
    scale only twice?
\end{quote}

This task is clearly harder, and indeed this sequence does tend to
infinity: If the total number of coins is $n$, then the needed number
of weighings is always greater than $\log_3n$ \cite{blog1}. And again,
we give it as homework for the reader to prove this lower bound as well as the upper bound of $n-1$ weighings.

There is also a huge spectrum of possible intermediate tasks.  For
example, how many coins can the Baron show at once with at most two
weighings?  What is the smallest number of weighings the Baron needs
to specify two coins?  Or, given the total number of coins, how many
weighings does the Baron need to show the weight of a particular coin?
What if the audience can choose which coin's weight the Baron must
prove?  Which of these tasks can be done in a fixed maximum number of
weighings, and which can not?  What asymptotic behaviors of the number
of needed weighings occur?  What happens if we start using different
families of sets of available coins, not just $1\ldots n$? There is
plenty to be curious about!

\section{Acknowledgements}

We are grateful to Peter Sarnak, who suggested a potential way out
when we were stuck in our proof, even though we ended up not using his
suggestion, having shortly thereafter found another door to go through.

\bigskip
\hrule
\bigskip

\noindent 2000 {\it Mathematics Subject Classification}: Primary
11B99; Secondary 00A08, 11P99.

\noindent \emph{Keywords: } weighing, puzzle, triangular decomposition.

\bigskip
\hrule
\bigskip

\noindent
(Mentions A020756.)

\bigskip
\hrule
\bigskip

\end{document}